\newenvironment{thmbis}[1]
 {%
 \addtocounter{thm}{-1}%
 \begin{thm}}
 {\end{thm}}
\newtheorem{thm}{Theorem}[section]
\newtheorem*{fixed point criterion}{Fixed point criterion}
\newtheorem{lem}[thm]{Lemma}
\newtheorem{prop}[thm]{Proposition}
\theoremstyle{definition}
\newtheorem{defn}[thm]{Definition}
\newtheorem{exam}[thm]{Example}
\newtheorem{ques}[thm]{Question}
\theoremstyle{remark}
\newtheorem{rem}[thm]{Remark}
\numberwithin{equation}{section}
\newcommand{\Z}{\mathbb Z}
\newcommand{\Q}{\mathbb Q}
\newcommand{\fix}{\mathrm{Fix}}
\newcommand{\ind}{\mathrm{ind}}
\newcommand{\chr}{\mathrm{chr}}
\newcommand{\ichr}{\mathrm{ichr}}
\newcommand{\rk}{\mathrm{rk}}
\newcommand{\aut}{\mathrm{Aut}}
\newcommand{\out}{\mathrm{Out}}
\newcommand{\Out}{\mathrm{Out}}
\newcommand{\inn}{\mathrm{Inn}}
\newcommand{\stab}{\mathrm{Stab}}
\newcommand{\trace}{\mathrm{Trace}}
\newcommand{\F}{\mathbf{F}}      %fixed point class F
\begin{document}
	\title{Iwip endomorphisms of free groups and fixed points of graph selfmaps}
	\author{Peng Wang}
	\address{School of Mathematics and Statistics, Xi'an Jiaotong University, Xi'an 710049, CHINA}
	
	\email{wp580829@stu.xjtu.edu.cn}
	
	\author{Qiang Zhang}
\address{School of Mathematics and Statistics, Xi'an Jiaotong University, Xi'an 710049, CHINA}

\email{zhangq.math@mail.xjtu.edu.cn}

\date{\today}
	
\thanks{The authors are partially supported by NSFC (No. 12271385 and 12471066), the Shaanxi Fundamental Science Research Project for Mathematics and Physics (No. 23JSY027), and the Fundamental Research Funds for the Central Universities.}

\keywords{Fixed point, fixed subgroup, iwip endomorphism, index, free group, graph selfmap}

	\begin{abstract}
In a paper from 2011, Jiang, Wang and Zhang studied the fixed points and fixed subgroups of selfmaps on a connected finite graph or a connected compact hyperbolic surface $X$. In particular, for any selfmap $f: X\to X$, they proved that a certain quantity defined in terms of the characteristic $\chr(f, \F)$ and the index $\ind(f, \F)$ of a fixed point class $\F$ of $f$ is bounded below by $2\chi(X)$, where $\chi(X)$ is the Euler characteristic of $X$. In this paper, we give a sufficient condition for when equality holds and hence we partially answer a question of Jiang. We do this by studying iwip outer endomorphisms of free groups acting on stable trees.
	\end{abstract}

\maketitle

% ----------------------------------------------------------------

\section{Introduction}\label{Sect. introduction}

For a selfmap $f$ of a compact connected polyhedron $X$, Nielsen fixed point theory is concerned with the properties of the fixed point set
$$\fix f:=\{x\in X\mid f(x)=x\},$$
which are invariant under any homotopy of the selfmap $f$ (see \cite{fp1, JZ} for an introduction to Nielsen fixed point theory).

The fixed point set $\fix f$ splits into a disjoint union of \emph{fixed point classes}: two fixed
points $x$ and $x'$ are in the same class if and only if there is a lifting $\tilde f: \widetilde X\to \widetilde X$ of $f$ such that $x, x'\in p\fix( \tilde f)$, where $p:\widetilde X\to X$ is the universal cover. For each fixed point class $\F$ of $f$, a classical homotopy invariant \emph{index} $\ind(f,\F)\in \Z$ that plays a central role in Nielsen fixed point theory (e.g. \cite{Gr, GJ, J98, ZZ3}), is well-defined by using homology. The famous Lefschetz-Hopf fixed point theorem says that the sum of the indices of the fixed points of $f$ is equal to the \emph{Lefschetz number} $L(f)$, which is defined as
$$L(f):=\sum_q(-1)^q\trace (f_*: H_q(X;\Q)\to H_q(X;\Q)).$$

A fixed point class is \emph{essential} if its index is non-zero. The number of essential fixed point classes of $f$ is called the \emph{Nielsen number} of $f$, denoted by $N(f)$, which is a lower bound for the minimum number $M(f)$ of fixed points in the homotopy class of $f$. In \cite{J80}, Jiang showed that $M(f)=N(f)$ for every selfmap $f: X\to X$ if $X$ has no local separating points and $X$ is not a compact surface.

In geometric group theory, there is an analogue of fixed point set in Nielsen theory: for an endomorphism $\phi$ of a group $G$, its \emph{fixed subgroup} refers to the subgroup
$$\fix\phi:=\{g\in G\mid \phi(g)=g\},$$
which has many special and interesting properties. In the past fifty years, the studies on fixed subgroups of various groups were very active (see \cite{Ve02} for a survey and \cite{AJZ22, LZ23} for some new progress).

For a group $G$, let $\rk G$ denote the \emph{rank} of $G$, i.e., the minimal number of generators, and let $\aut(G)$ (resp. $\out(G)$) denote the group of automorphisms (resp. outer automorphisms) of $G$. For free groups, a celebrated result was due to Bestvina and Handel \cite{BH}. They introduced train track maps for automorphisms of free groups and then proved the famous Scott's conjecture: for any automorphism $\phi$ of a free group $F_n$ of rank $n$, $\rk\fix\phi \leq \rk F_n.$

In the paper \cite{JWZ}, Jiang, Wang and Zhang defined a new homotopy invariant \emph{characteristic} $\chr(f,\F)\in \mathbb{Z}$ for a fixed point class $\F$ of a selfmap $f$ of a polyhedron $X$ as follows:  for $x\in \F$,  let $f_{\pi}: \pi_1(X,x)\rightarrow \pi_1(X,x)$ be the induced endomorphism of the fundamental group.
The \emph{rank} of $\F$ is defined as $\rk(f,\F):=\rk\fix f_\pi,$   and
$$\chr(f,\F):=1-\rk(f,\F),$$
with the only exception that $\chr(f,\F):=\chi(X)=2-\rk(f,\F)$ when $X$ is a closed (orientable or not) surface and $\fix f_\pi=\pi_1(X)$. Moreover, the following result was proved, which relates the concepts of characteristics and indices of fixed point classes.

\begin{thm}[Jiang-Wang-Zhang, \cite{JWZ}]\label{JWZ main theorem}
Suppose $X$ is either a connected finite graph or a connected
compact hyperbolic surface, and $f: X\rightarrow X$ is a selfmap.
Then

$\mathrm{(A)}$ $\ind(f, \F)\leq \chr(f, \F)$ for every fixed point class
$\F$ of $f$;

$\mathrm{(B)}$ when $X$ is not a tree,
$$\sum_{\ind(f, \F)+\chr(f, \F)<0}\{\ind(f, \F)+\chr(f, \F)\}\geq 2\chi(X),$$
where the sum is taken over all fixed point classes $\F$ with
$\ind(f, \F)+\chr(f, \F)<0$.
\end{thm}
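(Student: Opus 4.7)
The plan is to exploit covering-space theory. For a fixed point class $\F$ with basepoint $x$, I would pass to the cover $p_H: X_H \to X$ associated with the fixed subgroup $H := \fix f_\pi \subseteq \pi_1(X, x)$ and choose the lift $\tilde f: X_H \to X_H$ fixing a preimage $\tilde x$ of $x$. By construction $\tilde f$ induces the identity on $\pi_1(X_H, \tilde x) \cong H$; the fixed point class $\tilde \F \ni \tilde x$ of $\tilde f$ projects to $\F$ with $\ind(\tilde f, \tilde \F) = \ind(f, \F)$; and the compact core $C_H \subseteq X_H$ has Euler characteristic $\chi(C_H) = 1 - \rk(f, \F) = \chr(f, \F)$ in both the graph and surface settings (the surface exception emerging naturally when $H = \pi_1(X)$ and $C_H = X_H = X$).

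For Part (A), I would first homotope $f$ to a well-chosen representative: for a finite graph, a Bestvina--Handel topological representative with train-track structure and isolated fixed points; for a compact hyperbolic surface, a Nielsen--Thurston representative so that fixed point classes admit geodesic/convex descriptions. The technical core is then to verify that the essential fixed points of $\tilde f$ belonging to $\tilde \F$ lie inside $C_H$ after suitable homotopy, and that stray fixed points of $\tilde f$ outside $C_H$ lie in other classes. Since $\tilde f$ is homotopic to the identity on the aspherical space $X_H$, a compactly supported Lefschetz--Hopf argument on $C_H$ yields $\ind(f, \F) = \ind(\tilde f, \tilde \F) \leq \chi(C_H) = \chr(f, \F)$.

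For Part (B), the starting point is to combine Part (A) applied to the conditional set $\{\F : \ind(f, \F) + \chr(f, \F) < 0\}$ with the Lefschetz--Hopf identity $\sum_\F \ind(f, \F) = L(f)$. The factor $2\chi(X)$ should arise from two separate Euler-characteristic contributions: one from the Lefschetz sum, bounded via the $\chi$ of the covers $X_{H_\F}$ supporting essential fixed behavior, and one from the sum of characteristics, bounded via a Grushko/Bass--Serre style inequality constraining $\sum_\F \rk(f, \F)$ in terms of $\rk \pi_1(X)$ when fixed subgroups of inequivalent lifts are suitably disjoint. The main obstacle is twofold. First, in Part (A) one must ensure that stray fixed points outside $C_H$ cannot reverse the inequality — this is exactly where the Bestvina--Handel machinery (for graphs) and the Nielsen--Thurston decomposition (for surfaces) do the real work. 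Second, in Part (B) the conditional summation interacts subtly with Lefschetz--Hopf: one must show that the classes with $\ind + \chr \geq 0$, which are excluded from the sum, cannot absorb the slack needed to produce the $2\chi(X)$ bound, and this requires a careful case analysis of how positive-index classes distribute rank among the various covers.
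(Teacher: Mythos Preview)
The paper does not contain a proof of this statement. Theorem~\ref{JWZ main theorem} is quoted from \cite{JWZ} as a background result; the present paper simply cites it and then uses it (together with the characteristic/index formalism) as a starting point for the investigation of the equality case in (B). There is therefore no ``paper's own proof'' to compare your proposal against.

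As to the proposal itself: the general architecture for Part~(A)---pass to the cover corresponding to $\fix f_\pi$, take a compact core, and run a Lefschetz--Hopf argument there---is in the spirit of how such inequalities are typically established, and is close to the strategy of the original \cite{JWZ}. The weak point is the claim that ``stray fixed points of $\tilde f$ outside $C_H$ lie in other classes''; this is exactly the delicate step, and it does not follow automatically from taking a good representative of $f$ downstairs. In \cite{JWZ} this is handled by a careful mutation argument on graphs (and the Nielsen--Thurston normal form on surfaces), not by a direct covering-space observation. For Part~(B), your description is too vague to be a proof: the factor $2\chi(X)$ does not arise from a Grushko-type rank bound on the fixed subgroups, and the ``conditional summation interacts subtly with Lefschetz--Hopf'' paragraph identifies a difficulty without resolving it. In \cite{JWZ} the bound (B) is obtained by a global counting argument on a single good representative (controlling the total negativity of $\ind+\chr$ over all classes simultaneously), not by assembling per-class inequalities as you suggest.
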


Note that in the above theorem, a surface $X$ is \emph{hyperbolic} means that $X$ has negative Euler characteristic. An analogue of Theorem \ref{JWZ main theorem} for $3$-manifolds can be found in \cite{Z12}. Furthermore, in the recent papers \cite{ZZ1, ZZ2}, Zhang and Zhao defined the \emph{improved characteristic} $\ichr(f,\F)$ (see Section \ref{sect 2} for a definition) for a $\pi_1$-injective selfmap  $f$ of a connected finite graph $X$, and improved the inequality (A) in Theorem \ref{JWZ main theorem} into an equality, which gives a group-theoretical approach for computing indices of fixed point classes of $\pi_1$-injective selfmaps of graphs.

\begin{thm}[Zhang-Zhao, \cite{ZZ2}]\label{ZZ2 main thm 1}
	Let $X$ be a connected finite graph and $f: X\to X$ a $\pi_1$-injective selfmap. Then
	$$\ind(f, \F)=\ichr(f, \F),$$
	for every fixed point class $\F$ of $f$.
\end{thm}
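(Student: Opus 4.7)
The plan is to prove the equality by reducing $f$ to a suitable simplicial model, computing $\ind(f,\F)$ locally in terms of gate-theoretic data at each fixed point, and then matching that data term-by-term against the definition of $\ichr(f,\F)$. Since both sides of the claimed equality are homotopy invariants, I would first replace $f$ by a $\pi_1$-injective simplicial representative on a sufficiently fine subdivision of $X$ (using folding/Stallings-style reduction, or a train-track type argument as in Bestvina--Handel \cite{BH}). The $\pi_1$-injectivity rules out edge collapses, so every fixed point is either an isolated interior point of an edge or a vertex, and every lift $\tilde f$ of $f$ to the universal tree $\widetilde X$ is injective on vertices and edges.

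For a fixed point class $\F$, pick a representative $x\in \F$ and a lift $\tilde x$; then $\fix f_\pi$ is identified with $\mathrm{Stab}(\tilde x)\cap \fix \tilde f$ acting on $\widetilde X$. I would compute $\ind(f,\F)$ as a sum of local indices over $\F$ using the standard graph-map formula: the local index at an interior fixed point is determined by the orientation/stretching behavior of $f$ on the containing edge, while at a vertex $v$ it is $1$ minus the number of $df$-fixed gates at $v$ (or an equivalent expression in terms of ``fixed directions'' in the link). The collection of these fixed directions, after descent from $\widetilde X$, is precisely the extra combinatorial datum the improved characteristic must see beyond $\rk(\fix f_\pi)$. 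So the natural way $\ichr(f,\F)$ should be defined is as $1-\rk(\fix f_\pi)$ corrected by a count of fixed ends/directions attached to $\F$, and the proof reduces to a bookkeeping identity showing that this corrected quantity equals the local index sum.

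The main obstacle, I expect, is the bookkeeping at vertex fixed points: correctly accounting for fixed directions with the right multiplicities, distinguishing those that lie inside $\fix f_\pi$-invariant subtrees of $\widetilde X$ from those that genuinely produce boundary fixed points of $\tilde f$, and ensuring this matches the definition of $\ichr$ on the nose rather than just up to an inequality. The $\pi_1$-injectivity is the feature that makes this possible: it gives a clean tree-theoretic picture of $\fix\tilde f$ and prevents the kind of degeneracy that forced part (A) of Theorem \ref{JWZ main theorem} to be an inequality in the general (non-injective) case. Once the combinatorial matching is set up, passing from the local count back to $\ind(f,\F)$ should follow from the classical fixed-point index formula for graph selfmaps.
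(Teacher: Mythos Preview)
The paper does not prove this theorem at all: it is stated as a result of Zhang and Zhao and attributed to \cite{ZZ2}, with no argument given here. So there is no ``paper's own proof'' to compare your proposal against; in this manuscript Theorem~\ref{ZZ2 main thm 1} functions purely as a quoted black box, used (via equation~(\ref{equa. ichr psi}) and Theorem~\ref{2 thm}) to translate the topological inequality~(B) into the algebraic inequality $\ind(\Phi)\le n-1$.

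That said, your outline is broadly in the right spirit for how the result in \cite{ZZ2} is obtained, but it is not yet a proof. Two points deserve care. First, you describe $\ichr$ as something that ``should be defined'' by correcting $1-\rk\fix f_\pi$ with a count of fixed ends/directions; in fact the paper gives the definition explicitly as $\ichr(f,\F)=1-\rk\fix\psi-a(\psi)$, where $a(\psi)$ is the number of $\fix\psi$-orbits of \emph{attracting} fixed points of $\partial\psi$ on $\partial F_n$. The substantive step you are gesturing at---identifying the local count of $df$-fixed gates at fixed vertices with $a(\psi)$---is exactly the content of \cite{ZZ1,ZZ2}, and it requires distinguishing attracting from repelling fixed directions and showing that repelling directions cancel in pairs (via Nielsen paths) rather than contributing to the index. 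Your sketch does not address this attracting/repelling dichotomy. Second, the reduction to a ``suitable simplicial model'' is more delicate than you suggest: one needs a representative with controlled cancellation (e.g.\ a relative train track or a good representative in the sense of \cite{GJLL}) so that the bijection between fixed directions and points of $\mathcal A(\psi)/\fix\psi$ actually holds; an arbitrary simplicial $\pi_1$-injective map will not do.
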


Now we consider the inequality (B) in Theorem \ref{JWZ main theorem}.

For a surface automorphism $f$, the formula (B) becomes an
equality if and only if in the Thurston canonical form of $f$, each
periodic piece is of period $1$, and each pseudo-Anosov piece
keeps all prongs into themselves. Thus every surface
automorphism has an iterate for which (B) is an equality. %The allowance of iterations is needed.

The situation of graph selfmaps is much more complicated. Note that the left hand side of (B) is a mutation (see \cite{J98} for a definition) invariant while the right hand side is not. Hence if a graph map $f$ is a mutation from a graph of lower rank, then (B) can never be an equality. For instance when $f$ is not $\pi_1$-injective, i.e., the induced endomorphism $f_{\pi}$ of the fundamental group is not injective. Therefore, Jiang \cite{J09} posed the following question:

\begin{ques}[Jiang, \cite{J09}]\label{Jiang's question}
For any $\pi_1$-injective graph selfmap $f: X\to X$, when does there exist some iterate $f^k$ such that (B) is an equality?
\end{ques}

In this paper, we give a partial answer of Question \ref{Jiang's question}.

\begin{thm}\label{main thm3}
Suppose $X$ is a connected finite graph but not a tree, and $f: X\rightarrow X$ is a selfmap. Let $f_\pi$ induce an iwip outer endomorphism $\Phi$ of the fundamental group $\pi_1(X)=F_n$. Then there is an integer $k\geq 1$ such that
$$\sum_{\ind(f^k, \F)+\chr(f^k, \F)<0} \{\ind(f^k, \F)+\chr(f^k, \F)\}= 2\chi(X),$$
if and only if
\begin{enumerate}
	\item $\Phi\in \out(F_n)$ and the stable tree $T_\Phi$ is geometric; or
	\item $\Phi\notin \out(F_n)$ and every branch point in $G_\Phi$ is $h$-periodic.
\end{enumerate}
\end{thm}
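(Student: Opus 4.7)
The overall strategy is to realize $\Phi$ (or rather a suitable power $\Phi^k$) by a train track map $h : G \to G$, rewrite the left-hand sum of the desired identity in terms of the local dynamics of $h$ at its fixed points, and then invoke the index formula of Gaboriau--Jaeger--Levitt--Lustig (for the automorphism case) or its analogue for non-surjective iwip endomorphisms (for the endomorphism case). The link between the Nielsen-theoretic quantities and the tree-theoretic indices is supplied by Theorem~\ref{ZZ2 main thm 1}.

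\emph{Step 1 (passing to a power).} Using Bestvina--Handel train track theory (and its extension by Dicks--Ventura to iwip endomorphisms) one obtains a train track representative $h : G \to G$ of a suitable power $\Phi^k$. After enlarging $k$ if necessary we may assume that every $h$-periodic vertex, every periodic direction, and every periodic branch point is actually fixed, so that all fixed point classes of $h$ are located at $h$-fixed vertices. Via a homotopy equivalence of $X$ with $G$, computing $\ind(f^k,\F)+\chr(f^k,\F)$ reduces to computing $\ind(h,\F)+\chr(h,\F)$, and by Theorem~\ref{ZZ2 main thm 1} we may replace $\ind$ by $\ichr$, turning the question into a purely group-theoretic count over the fixed subgroups of $\Phi^k$.

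\emph{Step 2 (local computation).} Fix a class $\F$ concentrated at a fixed vertex $v \in G$, set $H:=\fix(h_\pi)\leq \pi_1(G,v)=F_n$, and let $e(v)$ denote the number of $h$-fixed directions at $v$. A standard local analysis of train track maps gives
\begin{align*}
\ind(h,\F) &= 1 - e(v), \\
\chr(h,\F) &= 1 - \rk(H),
\end{align*}
so that
$$\ind(h,\F)+\chr(h,\F) \;=\; -\bigl(2\rk(H)-2+e(v)\bigr),$$
which is the negative of the GJLL index attached to the conjugacy class $[H]$. In particular the classes satisfying $\ind+\chr<0$ are precisely the classes of positive GJLL index.

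\emph{Step 3 (global sum and equality conditions).} Summing the identity in Step~2,
$$\sum_{\ind(h,\F)+\chr(h,\F)<0}\bigl(\ind(h,\F)+\chr(h,\F)\bigr) \;=\; -\sum_{[H]}\bigl(2\rk(H)-2+e(v_H)\bigr).$$
In case (1) the GJLL index formula asserts that the right-hand sum is at most $2\rk(F_n)-2 = -2\chi(X)$, with equality if and only if the stable $\Real$-tree $T_\Phi$ is geometric; this is exactly the content of (1). In case (2), where $\Phi$ is not surjective, the object replacing $T_\Phi$ is the stable graph $G_\Phi$ together with the induced self-map. The analogous index inequality still holds, but here the defect $-2\chi(X)-\sum(2\rk(H)-2+e(v_H))$ is governed by the non-$h$-periodic branch points of $G_\Phi$, and one can show that the defect vanishes precisely when all such branch points are $h$-periodic. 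This yields (2).

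\medskip

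I expect Step~2 to be the main technical obstacle, as matching the Nielsen-theoretic pair $(\ind,\chr)$ with the GJLL index requires a careful bookkeeping of fixed directions, any periodic Nielsen paths (which are ruled out by our choice of $k$, but this has to be verified), and the contributions of ``non-principal'' fixed points lying on indivisible Nielsen paths. Additional care is required in case (2), where the lack of a genuine $\Real$-tree forces one to work directly with the combinatorics of $G_\Phi$ and to identify the non-periodic branch points as the exact source of the equality defect.
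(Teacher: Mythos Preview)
Your strategy has the right shape, but Step~2 contains an arithmetic slip and Step~1 rests on an assumption that fails exactly in case~(1).

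In Step~2 you compute $(1-e(v))+(1-\rk H)=2-e(v)-\rk H$, whereas $-(2\rk H-2+e(v))=2-2\rk H-e(v)$; these agree only when $\rk H=0$. The genuine GJLL index involves $a(\psi)$, the number of $\fix\psi$-orbits of attracting points on $\partial F_n$, not the number $e(v)$ of fixed directions at a train-track vertex, and the two differ precisely when $\fix\psi\neq 1$ or when indivisible Nielsen paths are present. More seriously, you cannot eliminate periodic Nielsen paths by passing to a power: a \emph{geometric} iwip automorphism---exactly case~(1)---has an indivisible Nielsen path in every train-track representative (for a pseudo-Anosov on a once-punctured surface it is the boundary loop), and it persists under all iterates. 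So your hypothesis that each fixed point class is a single vertex with $\rk H=0$ breaks down in the very case where you need equality.

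The paper sidesteps all of this. Theorem~\ref{ZZ2 main thm 1} gives $\ind(f,\F)=1-\rk\fix\psi-a(\psi)$ for \emph{any} $\pi_1$-injective graph selfmap, with no choice of train-track representative required, so $\ind(f,\F)+\chr(f,\F)=-2\ind(\psi)$ and the entire left-hand sum equals $-2\ind(\Phi^k)$ (Theorem~\ref{2 thm}). The problem is thus reduced to a purely algebraic one: when is $2\ind(\Phi^k)=2n-2$? This is Theorem~\ref{main thm2}, proved not via a train-track map on a marked graph but via the homothety $H$ on the stable tree $T_\Phi$: isogrediency classes of positive index in $\Phi^k$ correspond to $F_n$-orbits of branch-point centers of $H^k$, and equality with $\ind_{\mathrm{geo}}(T_\Phi)$ holds exactly when every branch point of $T_\Phi$ (equivalently of $G_\Phi$) arises as such a center---which is the geometric condition in case~(1) and the $h$-periodicity condition in case~(2). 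Your Step~3 for case~(2) gestures at this, but the actual mechanism (Proposition~\ref{key prop} and the branch-point/center correspondence) is where the real work lies, and it is not a consequence of a GJLL-type inequality alone.
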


For detailed definitions of $T_\Phi$, $G_\Phi$ and the map $h$, see Section \ref{sect. iwip}. An outer endomorphism $\Psi$ of a free group $F_n$ is \emph{fully irreducible} (abbreviated as \emph{iwip}) if no positive power $\psi^k$ ($\psi\in \Psi$) maps a proper nontrivial free factor into a conjugate of itself. Being an iwip is one of the analogs for free groups of being pseudo-Anosov for mapping classes of hyperbolic surfaces. Roughly speaking, \emph{geometric} means that the tree $T_\Phi$ is transverse to a measured foliation on a finite $CW$-complex. For example, if $\Phi$ comes from a pseudo-Anosov mapping class on a compact surface $S$ with one boundary component, then $\Phi$ is iwip and $T_\Phi$ is geometric because $T_\Phi$ lives in the Thurston boundary of Teichm{\"u}ller space: it is dual to a measured foliation on $S$. See Section \ref{sect. iwip} for more information.

To prove Theorem \ref{main thm3}, we investigate iwip endomorphisms of a free group $F_n$, and show a result as follows (for an analog of automorphisms, see Coulbois and Hilion \cite{CH1}).

\begin{thm}\label{main thm2}
For every iwip outer endomorphism $\Phi$ of a free group $F_n$ of rank $n$, let $T_\Phi$ be the stable tree of $\Phi$. Then for all $m>0$, we have
$$
2\ind(\Phi^m)\leq\ind_{\mathrm{geo}}(T_\Phi)
\left\{
\begin{array}{ll}
  \leq 2n-2, &  \Phi\in \out(F_n)\\
  =2n-2, & \Phi\notin \out(F_n)
\end{array}
\right..
$$
Moreover, there exists $k>0$ such that
$$ 2\ind(\Phi^k)=\ind_{\mathrm{geo}}(T_\Phi)=2n-2,$$
if and only if
\begin{enumerate}
	\item $\Phi\in \out(F_n)$ and $T_\Phi$ is geometric; or
	\item $\Phi\notin \out(F_n)$ and every branch point in $G_\Phi$ is $h$-periodic, where $G_\Phi$ is the quotient graph of $T_\Phi$ by the $F_n$-action, and $h:G_\Phi\rightarrow G_\Phi$ is the map induced by the homothety $H$ associated to an iwip endomorphism $\phi\in\Phi$.
\end{enumerate}
\end{thm}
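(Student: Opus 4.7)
The plan is to follow the strategy of Coulbois and Hilion~\cite{CH1}, who proved the analogous statement for iwip outer \emph{automorphisms}, and then extend it to the case when $\Phi$ is a strict endomorphism (i.e., $\Phi \notin \out(F_n)$). Both halves rest on a common dictionary between algebraic data (the non-trivial conjugacy classes of fixed subgroups $\fix(\phi^m)$ and the algebraic index $\ind(\Phi^m)$) and geometric data on the stable tree (the $F_n$-orbits of branch points of $T_\Phi$ that are fixed by the homothety $H^m$, together with their local valences), linked by the equivariance relation $H(gx) = \phi(g) H(x)$ for $g \in F_n$.

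For the inequality $2\ind(\Phi^m) \leq \ind_{\mathrm{geo}}(T_\Phi)$, I would attach to each non-trivial conjugacy class of $\fix(\phi^m)$ a unique $F_n$-orbit of $H^m$-fixed branch points of $T_\Phi$ and compare local contributions orbit-by-orbit. The key identification is that the ends at a fixed branch point are in bijection with the attracting fixed points on $\partial F_n$ of the corresponding lift of $\phi^m$; since the algebraic side counts only attracting ends while the geometric side counts all ends, the factor of $2$ absorbs this mismatch. The upper bound $\ind_{\mathrm{geo}}(T_\Phi) \leq 2n-2$ is the Gaboriau--Levitt index formula for minimal very small $F_n$-trees, with equality if and only if $T_\Phi$ is geometric. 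In the strict endomorphism case, I would construct a system of partial isometries realizing $T_\Phi$ directly from a train track representative of $\phi$, using the non-surjectivity of $\phi$ to supply the required gluing data; this forces $T_\Phi$ to be geometric and hence $\ind_{\mathrm{geo}}(T_\Phi) = 2n-2$.

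For the equality characterization, the automorphism case is essentially Coulbois--Hilion: since $T_\Phi$ is unchanged under iteration, $\ind_{\mathrm{geo}}(T_\Phi) = 2n-2$ is equivalent to $T_\Phi$ being geometric, and when this holds, choosing $k$ so that every $H$-periodic orbit of branch points is $H^k$-fixed makes $2\ind(\Phi^k)$ realize the full geometric index. In the strict endomorphism case, $\ind_{\mathrm{geo}}(T_\Phi) = 2n-2$ holds automatically, and $2\ind(\Phi^k)$ attains this maximum precisely when every $F_n$-orbit of branch point of $T_\Phi$ is eventually periodic under $H$, which after projecting to $G_\Phi$ is exactly the condition that every branch point of $G_\Phi$ is $h$-periodic. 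The main obstacle I anticipate is this last case: because $H$ is not bijective, $h$ need not be a homeomorphism of $G_\Phi$, so the standard translation between the algebraic indices of $\Phi^k$ and the branch-point dynamics of $H^k$ must be redone carefully, tracking how branch orbits can be merged, created, or left periodic under iteration of $h$, in order to show that a non-periodic branch point is genuinely invisible to every $2\ind(\Phi^k)$.
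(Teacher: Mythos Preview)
Your overall strategy is the paper's strategy: set up a correspondence between isogrediency classes in $\Phi^m$ with positive index and $F_n$-orbits of centers of the associated homotheties in $T_\Phi$, compare local contributions, and invoke Gaboriau--Levitt for the bound $\ind_{\mathrm{geo}}(T_\Phi)\le 2n-2$. However, several points are either unnecessarily hard or misstated. First, for $\Phi\notin\out(F_n)$ you propose to build a system of partial isometries to prove $T_\Phi$ is geometric; the paper avoids this entirely by quoting Reynolds' result that $T_\Phi$ is \emph{simplicial with a free $F_n$-action}, and then the Gaboriau--Levitt observation that any minimal simplicial $F_n$-tree with finite-rank edge stabilizers is geometric. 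This one line replaces your proposed construction and also gives the crucial structural facts (free action, finitely many branch orbits, bounded valence) used downstream.

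Second, your dictionary is phrased in terms of ``non-trivial conjugacy classes of $\fix(\phi^m)$'', but when $\Phi\notin\out(F_n)$ the free action forces $\fix(\phi^m)\subset\stab(C_{H^m})=\{1\}$ for every representative, so all fixed subgroups are trivial and the entire algebraic index comes from the attracting fixed words $a(\phi^m)$. The paper's local identity is $a(\phi^m)=\#\pi_0^{H^m}(T_\Phi\setminus C_{H^m})$ (number of $H^m$-fixed directions), so that $2\ind(\phi^m)=a(\phi^m)-2\le \#\pi_0(T_\Phi\setminus C_{H^m})-2=\ind_{\mathrm{geo}}(C_{H^m})$; the factor of $2$ is purely definitional here, not an attracting/repelling dichotomy (there are no repelling fixed points in the non-invertible case). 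Third, your equality criterion ``every $F_n$-orbit of branch points is \emph{eventually} periodic under $H$'' is not equivalent to ``every branch point of $G_\Phi$ is $h$-periodic''; a branch point that is only pre-periodic for $h$ is never the center of any $H^k$, so its local geometric index is never matched by any $2\ind(\psi)$, forcing strict inequality for all $k$. The paper makes exactly this distinction (and gives Example~6.2 of a theta-graph where one branch vertex is not $h$-periodic, so $2\ind(\Phi^k)<2n-2$ for all $k$).
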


This paper is organized as follows. In Section \ref{sect 2}, we introduce the concept $\ind(\Psi)$ for any outer endomorphism $\Psi$ of a free group. In Section \ref{sect 3}, we study the relationship between  $\ind(\Psi)$ and $\ind(f, \F)$ of a fixed point class $\F$ of a graph selfmap $f$. In Section \ref{sect. iwip}, we give some facts and results on the geometric index $\ind_{\mathrm{geo}}(T_\Psi)$ of iwip endomorphisms. Finally in Section \ref{sect mani results} and Section \ref{sect last}, we prove the main theorems of this paper and give some examples.

%%=========================================================================================================================

\section{Index of an outer endomorphism}\label{sect 2}

In this section, we review some definitions and results on endomorphisms of a free group $F_n$ of rank $n$ (see \cite[Sect. 2.1]{ZZ1} for more details).

\subsection{Attracting fixed point}\label{subsect. Attracting fixed point}

An injective endomorphism $\phi$ of the free group $F_n$ extends to a continuous
injective map $\partial\phi: \partial F_n\to \partial F_n$ of the boundary at infinity $\partial F_n$. The boundary $\partial \fix\phi$ naturally embeds
in $\partial F_n$. A fixed point $W$ of $\partial\phi$ is \emph{attracting} if $W\not\in \partial \fix\phi$ and if there exists an element $u\in F_n$ such that $\phi^k(u)$ converges to $W$. Let $\mathcal A(\phi)$ be the set of attracting fixed points of $\phi$. Two attracting fixed points $W, V\in \mathcal A(\phi)$ are called \emph{equivalent} if
$W=gV$ for some $g\in\fix\phi$. Let
$$a(\phi):=\#(\mathcal A(\phi)/\fix\phi),$$
the cardinality of $\mathcal A(\phi)/\fix\phi$, or equivalently, the number of orbits of $\fix\phi$ acting on $\mathcal A(\phi)$.

\subsection{Isogrediency class}
For an endomorphism $\phi$ of $F_n$, the \emph{outer endomorphism} of $\phi$ is
$$\Phi:=\{i_m\comp\phi\mid m\in F_n\}.$$
So two endomorphisms $\phi_1,\phi_2$ of $F_n$ represent the same outer endomorphism $\Phi$ if there exists $m\in F_n$ such that $\phi_2=i_m\comp\phi_1$ with $i_m(g)=mgm^{-1}$.
Moreover, $\phi_1$ and $\phi_2$ are said to be \emph{isogredient} (or \emph{similar} in \cite{GJLL}) if $m$ can be written $m=c\phi_1 (c^{-1})$ for some $c\in F_n$, equivalently, if $\phi_2=i_c\comp\phi_1\comp (i_c)^{-1}$. Isogredient endomorphisms represent the same endomorphism, up to a change of basis in $F_n$. In particular, the rank of $\fix\phi$ and the number $a(\phi)$ are isogredient invariants.
\begin{rem}
In \cite{ZZ1}, an outer endomorphism $\Phi$ contained $\phi$ is called an \emph{Inn-coset} and written as $\inn\phi$.
\end{rem}

\subsection{Index of an outer endomorphism}

In \cite{GJLL}, Gaboriau, Jaeger, Levitt and Lustig studied the fixed subgroups of automorphisms of free groups and showed

\begin{thm}[Gaboriau et al. \cite{GJLL}]\label{GJLL thm}
Let $\phi_0,\cdots,\phi_k$ be automorphisms of $F_n$ represent the same outer automorphism $\Phi\in \out(F_n)$ and belong to distinct isogrediency classes. Then
$$\sum^{k}_{i=0}\{\rk\fix\phi_i+\frac{1}{2}a(\phi_i)-1\}\leq n-1.$$
Equivalently, $\sum_{\phi\in\mathscr{S}(\Phi)}\max\{0, ~\rk\fix\phi+\frac{a(\phi)}{2}-1\}\leq n-1$, where $\mathscr{S}(\Phi)$ is the set of isogrediency classes of automorphisms representing a given outer automorphism $\Phi\in \Out(F_n)$.
\end{thm}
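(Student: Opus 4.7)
The plan is to prove the inequality by realizing $\Phi$ on a train track and assembling an Euler characteristic count indexed by the isogrediency classes. First, by Bestvina--Handel I would replace $\Phi$ by a train track representative $f: G \to G$ on a finite graph with $\pi_1(G) = F_n$, subdivided so that every fixed point of $f$ is a vertex. Lifting to the universal cover $T$, each automorphism $\phi_i$ representing $\Phi$ corresponds to a lift $\tilde f_i: T \to T$ determined by the twisted equivariance $\tilde f_i \circ \gamma = \phi_i(\gamma) \circ \tilde f_i$ for all $\gamma \in F_n$. A direct check shows that two lifts lie in the same $F_n$-conjugacy orbit $\tilde f \mapsto c \tilde f c^{-1}$ precisely when the corresponding automorphisms are isogredient, and that only those $\phi_i$ admitting a lift with $\fix \tilde f_i \neq \emptyset$ can contribute a positive summand to the inequality.

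Second, I would analyze a single lift $\tilde f_i$. The group $\fix \phi_i$ acts freely on the forest $\fix \tilde f_i \subset T$, with quotient a finite subgraph of $G$ of Euler characteristic $1 - \rk \fix \phi_i$ (or a single point, in degenerate cases). Extending $\tilde f_i$ continuously to the boundary $\partial T = \partial F_n$, the Bestvina--Handel analysis of train tracks identifies each attracting fixed point of $\partial \tilde f_i$ outside $\partial \fix \phi_i$ with a distinguished fixed direction at a vertex of $\fix \tilde f_i$ pointing into an edge expanded by $f$. Taking the quotient by $\fix \phi_i$ gives $a(\phi_i)$ such directions per isogrediency class. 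Attaching a half-open ray at each such direction, I obtain an object whose orbifold Euler characteristic equals
$$1 - \rk \fix \phi_i - \tfrac{1}{2} a(\phi_i).$$

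Third, I would assemble the local pieces inside $G$. Since distinct isogrediency classes produce lifts in distinct $F_n$-orbits, their fixed subgraphs inside $G$ are essentially disjoint (they overlap at most on a fixed forest of $G$), and the attracting directions at any vertex are partitioned among the classes without repetition. Summing the local contributions and comparing with $\chi(G) = 1 - n$ yields
$$\sum_{i=0}^{k} \left(1 - \rk \fix \phi_i - \tfrac{1}{2}\, a(\phi_i)\right) \geq 1 - n,$$
which rearranges to the desired bound.

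The main obstacle will be the disjointness argument in the third step: I must rule out that two distinct isogrediency classes share an attracting direction at a common vertex of $G$, and handle the degenerate cases in which $\fix \phi_i$ is trivial but $a(\phi_i) \geq 2$ still produces a positive summand. Both difficulties hinge on Bestvina--Handel's classification of fixed points at infinity for train track maps, together with the observation that different $F_n$-orbits of lifts are separated by deck translations acting freely on $T$, forcing their fixed loci into disjoint regions of $T$ that project to essentially disjoint features of $G$.
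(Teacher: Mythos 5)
The paper does not give its own proof of this theorem; it is cited directly from Gaboriau--Jaeger--Levitt--Lustig \cite{GJLL}. The GJLL argument works on the $\mathbb{R}$-tree side: one constructs a stable $F_n$-tree $T$ associated to $\Phi$, identifies the contributing isogrediency classes with branch points of $T$ (via homothety centers), and applies the Gaboriau--Levitt bound $\ind_{\mathrm{geo}}(T) \leq 2n-2$. Your proposal works instead directly on a graph representative with an Euler-characteristic count; this is genuinely a different route, closer to the Bestvina--Handel proof of the Scott conjecture and to the Jiang--Wang--Zhang argument for inequality (B), which Theorem~\ref{2 thm} of the present paper shows is equivalent to the statement at hand.

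There are, however, two genuine gaps in the sketch. The opening move --- ``replace $\Phi$ by a train track representative'' --- is only available when $\Phi$ is irreducible (iwip). For a general outer automorphism, Bestvina--Handel produce only a \emph{relative} train track representative, with a filtration by strata of zero, polynomial, and exponential growth. The clean dictionary you invoke in step two, identifying attracting points of $\partial\tilde f_i$ with fixed directions at vertices of $\fix\tilde f_i$ pointing into expanded edges, is specific to absolute train track maps; in the relative setting the directions entering non-exponential strata and the attracting points arising from lower strata require a separate and substantially more delicate analysis. As written, your argument only establishes the bound for iwip $\Phi$.

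The second gap is in the assembly step. You pass from the disjointness of the projected fixed subgraphs $\Gamma_i$ and the non-repetition of attracting directions to the inequality $\sum_i\bigl(\chi(\Gamma_i) - \tfrac12 a(\phi_i)\bigr) \geq \chi(G)$. Disjointness is indeed correct (distinct lifts sharing a fixed point in $\widetilde{X}$ would coincide), but it is not sufficient: writing $\chi(G) = |V| - \tfrac12|D|$ with $D$ the set of directions, the inequality reduces to showing that every vertex and direction of $G$ lying outside the $\Gamma_i$ and not attracting for any class contributes non-negatively, and --- more seriously --- that isogrediency classes whose lifts have \emph{no} fixed point in $\widetilde{X}$ (so $\fix\phi_i = 1$, $\Gamma_i = \emptyset$) yet have $a(\phi_i) \geq 3$ cannot occur, or are otherwise accounted for. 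You flag this degenerate case yourself but do not resolve it; it is exactly the point where the Bestvina--Handel fixed-direction analysis is needed, and it is what GJLL sidestep by working with branch points of the $\mathbb{R}$-tree, where the Gaboriau--Levitt index formula does the bookkeeping automatically.
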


Following Theorem \ref{GJLL thm}, Gaboriau et al. gave a concept of $index$ of an automorphism $\phi\in \aut(F_n)$ as
$$\ind(\phi):=\rk\fix\phi+\frac{1}{2}a(\phi)-1, $$
and the $index$ of an outer automorphism $\Phi\in \out(F_n)$ as
\begin{equation}\label{def ind for Out}
\ind(\Phi):=\sum_{\phi\in\mathscr{S}(\Phi)}\max\{0,~\ind(\phi)\}.
\end{equation}
Therefore, Theorem \ref{GJLL thm} is equivalent to the inequality
$\ind(\Phi)\leq n-1,~~~~~~\forall\Phi\in \Out(F_n).$

Recently, Zhang and Zhao \cite[Theorem 2.6]{ZZ1} extended the above result (Theorem \ref{GJLL thm}) of automorphisms to that of injective endomorphisms. So, we define the \emph{index for outer injective endomorphisms} of free groups as in equation (\ref{def ind for Out}), and have an analogous inequality as follows.

\begin{thm}\label{ZZ1 thm2.6'}
Let $\Phi$ be an outer injective endomorphism of a free group $F_n$. Then
\begin{equation}\label{2' equ}
\ind(\Phi)\leq n-1.
\end{equation}
\end{thm}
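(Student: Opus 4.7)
The plan is to adapt the Gaboriau--Jaeger--Levitt--Lustig argument from Theorem \ref{GJLL thm} to the injective-endomorphism setting. The classical proof for automorphisms works by choosing a train-track representative, organizing lifts to the universal cover by isogrediency class, and counting fixed points (both interior and at infinity) via an Euler characteristic bound. I would try to push this through for an arbitrary injective endomorphism, with the caveat that the lifts are now only injective maps of the universal cover rather than homeomorphisms.

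First, I would realize $\Phi$ by a $\pi_1$-injective graph selfmap $f\colon X\to X$ with $X$ a finite connected graph and $\pi_1(X)\cong F_n$, so $\chi(X)=1-n$. The isogrediency classes in $\mathscr{S}(\Phi)$ are then in bijection with $F_n$-conjugacy classes of lifts $\tilde f_\phi\colon\tilde X\to\tilde X$ of $f$ to the universal cover. For each such lift, $\fix\phi$ is identified with the subgroup of deck transformations commuting with $\tilde f_\phi$, acting on $\fix\tilde f_\phi\subset\tilde X$, while $\mathcal{A}(\phi)\subset\partial F_n\cong\partial\tilde X$ is the set of attracting fixed points of $\partial\phi$, whose $\fix\phi$-orbit count equals $a(\phi)$.

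Next, to each class $[\phi]\in\mathscr{S}(\Phi)$ I would attach a finite ``fixed-subgraph-with-prongs'' $Y_\phi\subset X$, obtained as the image of $\fix\tilde f_\phi$ in $X$ together with one half-ray glued on for each orbit in $\mathcal{A}(\phi)/\fix\phi$. Its Euler characteristic is $\chi(Y_\phi)=(1-\rk\fix\phi)-\tfrac12 a(\phi)=-\ind(\phi)$ whenever $\ind(\phi)\geq 0$. The key geometric claim is that distinct isogrediency classes yield essentially disjoint $Y_\phi$ inside $X$, so that summing and comparing Euler characteristics gives $\sum_{\phi\in\mathscr{S}(\Phi)}\max\{0,\ind(\phi)\}=-\sum_{\ind(\phi)\geq 0}\chi(Y_\phi)\leq -\chi(X)=n-1$, which is precisely inequality (\ref{2' equ}).

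The main obstacle is that, when $\phi$ is injective but not surjective, the lift $\tilde f_\phi$ is not a homeomorphism: it maps $\tilde X$ strictly into itself. Hence the fixed-point structure in $\tilde X$ is not governed by a local source/sink dynamical picture as in the automorphism case, and the attracting fixed points at infinity must be controlled purely through the forward dynamics of $\partial\phi$ on $\partial F_n$. The technical heart of the theorem is to set up an appropriate train-track or lamination framework for injective endomorphisms in which each attracting end of $\tilde X$ accessible from $\fix\tilde f_\phi$ genuinely corresponds to a point of $\mathcal{A}(\phi)$, and in which the pieces $Y_\phi$ for distinct isogrediency classes are demonstrably disjoint; once this is in place, the Euler-characteristic packing step is a direct generalization of the argument of \cite{GJLL}.
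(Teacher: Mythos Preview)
The paper does not give an independent argument here: its entire proof is the one-line observation that the inequality is ``clearly'' equivalent to \cite[Theorem~2.6]{ZZ1}, which is quoted as input. So your proposal is attempting something the paper deliberately avoids, namely reproving the Zhang--Zhao result from scratch along GJLL lines.

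Your outline, however, has a concrete error at the packing step. You assert that attaching one ``half-ray'' for each $\fix\phi$-orbit of attracting fixed points produces a space $Y_\phi$ with
\[
\chi(Y_\phi)=(1-\rk\fix\phi)-\tfrac12\,a(\phi)=-\ind(\phi).
\]
But a half-ray $[0,\infty)$ is contractible, and gluing it to a graph at a single point does not change the Euler characteristic. So no matter how many rays you attach, you will only ever recover $\chi(Y_\phi)=1-\rk\fix\phi$, and the $\tfrac12 a(\phi)$ term simply does not appear this way. In GJLL the attracting directions are not accounted for by an Euler-characteristic-of-subgraphs argument; they enter through a finer count of germs of directions at fixed vertices in a good (train-track) representative, and the factor $\tfrac12$ comes from the fact that each edge contributes two directions. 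Your ``disjoint $Y_\phi$'' picture does not capture this.

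If you want a correct route that stays close to the paper's toolkit, note that Theorem~\ref{2 thm} in fact shows the desired inequality is \emph{equivalent}, for $\pi_1$-injective graph maps, to inequality~(B) of Theorem~\ref{JWZ main theorem}: one uses the Zhang--Zhao identity $\ind(f,\F)=\ichr(f,\F)=1-\rk\fix\psi-a(\psi)$ from Theorem~\ref{ZZ2 main thm 1} to rewrite $\ind(f,\F)+\chr(f,\F)=-2\,\ind(\psi)$ and then invokes the JWZ bound $\sum\{\ind+\chr\}\ge 2\chi(X)=2(1-n)$. That is the mechanism by which $a(\phi)$ actually enters the count, and it bypasses the tree-packing heuristic entirely.
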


\begin{proof}
The result is equivalent to \cite[Theorem 2.6]{ZZ1} clearly.
\end{proof}

\section{Relationship between Theorem \ref{JWZ main theorem} and Theorem \ref{ZZ1 thm2.6'}}\label{sect 3}

In this section, we first review some definitions and results on fixed point classes, see \cite[Sect. 3]{ZZ1} for more details.

Let $f:X\rightarrow X$ be a $\pi_1$-injective selfmap of a connected finite graph $X$, $p: \tilde{X}\to X$ a universal cover of $X$, with group $\pi$ of covering transformations identified with the fundamental group $\pi_1(X)=F_n$.

For any lifting $\tilde{f}:\tilde{X}\to \tilde{X}$ of $f$, the projection of its fixed point set is called a \emph{fixed
point class} of $f$, written $\F=p(\fix\tilde{f})$. When $\fix\tilde{f}= \emptyset$, we call $p(\fix\tilde{f})$ an \emph{empty} fixed point class. Each lifting $\tilde{f}$ induces an endomorphism $\tilde{f}_{\pi}:\pi\to\pi$ defined by
$$\tilde{f}\comp \gamma=\tilde{f}_{\pi}(\gamma)\comp \tilde{f},\quad \gamma\in \pi.$$
If two liftings $\tilde{f}$ and $\tilde{f'}$ label the same fixed point class $\F=p(\fix\tilde f)=p(\fix\tilde f')$, then there exists $\gamma\in \pi$ such that $\tilde{f'}=\gamma\comp \tilde{f}\comp \gamma^{-1}$, and hence the induced injective endomorphisms are isogredient
$\tilde f'_{\pi}=i_{\gamma}\comp\tilde f_{\pi}\comp i_{\gamma^{-1}}$. (Note that empty fixed point classes have the same index $0$ but may have different labels and hence be regarded as different.) Therefore,
$$\rk(f,\F):=\rk\fix(\tilde f_{\pi})=\rk\fix (\tilde f'_{\pi}), \quad\quad a(f, \F):=a(\tilde f_{\pi})=a(\tilde f'_{\pi}).$$
The \emph{improved characteristic} is defined to be
\begin{equation}\label{equa. ichr psi}
\ichr(f,\F):=1-\rk(f,\F)-a(f,\F)=1-\rk\fix(\tilde f_{\pi})-a(\tilde f_{\pi}).
\end{equation}
Zhang and Zhao (Theorem \ref{ZZ2 main thm 1}) showed that for every $\pi_1$-injective selfmap $f$ of a connected finite graph $X$,
$$\ind(f,\F)= \ichr(f,\F).$$
Note that the index $\ind(f,\F)$ was initially defined by using homology, now the above equality gives a group-theoretical approach for computing indices of fixed point classes of $\pi_1$-injective selfmaps of graphs.
Moreover, we have the following.

\begin{thm}\label{2 thm}
Let $X$ be a connected finite graph but not a tree. Then for every $\pi_1$-injective selfmap $f$ of $X$, inequality (B) in Theorem \ref{JWZ main theorem} is equivalent to inequality (\ref{2' equ}) in Theorem \ref{ZZ1 thm2.6'}, in particular, the equal signs in them hold simultaneously.
\end{thm}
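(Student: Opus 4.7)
The plan is a direct term-by-term comparison: I would show that, as a function of an injective endomorphism $\phi=\tilde f_{\pi}$ representing the outer endomorphism $\Phi$, the quantity $\ind(f,\F)+\chr(f,\F)$ equals $-2\ind(\phi)$. This converts the left-hand side of (B) into $-2\ind(\Phi)$ and $2\chi(X)$ into $-2(n-1)$, so (B) and (\ref{2' equ}) become the same inequality, up to the factor $-2$. The bridge is Theorem \ref{ZZ2 main thm 1}, which identifies the topological index $\ind(f,\F)$ with the group-theoretic invariant $\ichr(f,\F)=1-\rk(f,\F)-a(f,\F)$.

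First, I would match the summands via the bijection between labeled fixed point classes $\F$ of $f$ and isogrediency classes of injective endomorphisms $\phi=\tilde f_{\pi}$ in $\Phi$; this bijection is recalled in the section preamble (two lifts label the same class iff they are $\pi$-conjugate iff their induced endomorphisms are isogredient), and under it $\rk(f,\F)=\rk\fix\phi$ and $a(f,\F)=a(\phi)$. Combining the definition $\chr(f,\F)=1-\rk(f,\F)$, the identity $\ind(f,\F)=\ichr(f,\F)=1-\rk(f,\F)-a(f,\F)$, and the definition $\ind(\phi):=\rk\fix\phi+\frac{1}{2}a(\phi)-1$ gives
\begin{equation*}
\ind(f,\F)+\chr(f,\F)=\bigl(1-\rk\fix\phi-a(\phi)\bigr)+\bigl(1-\rk\fix\phi\bigr)=-2\ind(\phi).
\end{equation*}
Hence $\ind(f,\F)+\chr(f,\F)<0$ iff $\ind(\phi)>0$, and summation over all such classes yields
$$\sum_{\ind(f,\F)+\chr(f,\F)<0}\bigl\{\ind(f,\F)+\chr(f,\F)\bigr\}=-2\sum_{\ind(\phi)>0}\ind(\phi)=-2\ind(\Phi).$$
Since $X$ is a finite connected graph with $\pi_1(X)=F_n$, we have $\chi(X)=1-n$, so (B) reads $-2\ind(\Phi)\geq -2(n-1)$, which is precisely (\ref{2' equ}); in particular the two equality cases coincide.

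The only mildly delicate point is to check that empty fixed point classes cause no accounting mismatch. Since the index vanishes on an empty class, Theorem \ref{ZZ2 main thm 1} forces $\rk(f,\F)+a(f,\F)=1$ there, whence $\ind(\phi)=\rk\fix\phi+\frac{1}{2}a(\phi)-1\leq 0$ and $\ind(f,\F)+\chr(f,\F)\geq 0$, so empty classes lie outside the summation range on both sides. This bookkeeping is routine rather than a genuine obstacle; the arithmetic identity $\ind(f,\F)+\chr(f,\F)=-2\ind(\phi)$ is what actually carries the proof.
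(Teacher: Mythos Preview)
Your proof is correct and follows essentially the same route as the paper: both use the bijection between fixed point classes and isogrediency classes, invoke Theorem~\ref{ZZ2 main thm 1} to rewrite $\ind(f,\F)$ as $\ichr(f,\F)$, compute $\ind(f,\F)+\chr(f,\F)=-2\ind(\phi)$, and conclude via $\chi(X)=1-n$. Your additional paragraph on empty fixed point classes is harmless but redundant, since the identity $\ind(f,\F)+\chr(f,\F)=-2\ind(\phi)$ already aligns the summation ranges regardless of whether a class is empty.
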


\begin{proof}
For an injective endomorphism $\phi$ of $F_n$, let $\Phi$ be the outer endomorphism represented by $\phi$. Then there exists a $\pi_1$-injective selfmap $f:X\rightarrow X$ of a connected finite graph $X$ with $\pi_1(X)=F_n$ representing $\Phi$, i.e., for any $i_c\comp\phi\in \Phi$, there is a lifting $\tilde f:\tilde X\to \tilde X$ such that $i_c\comp\phi=\tilde{f}_{\pi}$.

If $n=1$, it is trivial. Now we suppose $n\geq 2$. As in the proof of \cite[Theorem 2.6]{ZZ1}, there is a one-to-one correspondence $\F\mapsto [\psi]$ between the fixed point classes of $f$ and isogrediency classes contained in $\Phi$, with
\begin{equation}\label{equa. ichr psi}\nonumber
\ichr(f,\F)=1-\rk(f,\F)-a(f,\F)=1-\rk\fix\psi-a(\psi).
\end{equation}
By Theorem \ref{ZZ2 main thm 1}, for any fixed point class $\F$ of $f$, we have
$$\ind(f,\F)+\chr(f,\F)=-2(\rk\fix\psi+a(\psi)/2-1)=-2\ind(\psi).$$
Note that the Euler characteristic $\chi(X)=1-n$, so inequality (B) in Theorem \ref{JWZ main theorem} is equivalent to
$$\ind(\Phi)=\sum_{\psi\in\mathscr{S}(\Phi)}\max\{0, ~\ind(\psi)\}\leq n-1,$$
where the sum runs over all distinct isogrediency classes contained in $\Phi$.
\end{proof}

\begin{rem}
Theorem \ref{2 thm} gives the relationship between the Nielsen fixed point index and the index of endomorphisms, namely, we translate the topological problem to an algebraic one. In the following, we only need to consider the algebraic version.
\end{rem}

%%========================================================================================================================================

\section{Iwip endomorphism, stable tree and geometric index}\label{sect. iwip}

Recall that an outer endomorphism $\Phi$ of a free group $F_n$ is \emph{fully irreducible} (\emph{iwip}) if no positive power $\phi^k$ maps a proper nontrivial free factor into a conjugate of itself,  which is an analogue of a pseudo-Anosov homeomorphism of a closed surface. The notion of iwip automorphisms of free groups was introduced by Bestvina and Handel \cite{BH} inspired by Thurston's work on surface homeomorphisms. Reynolds \cite{R} extended the results in \cite{BH} to irreducible endomorphisms.
Note that, irreducible in \cite{R} means fully irreducible, i.e., iwip in the sense of many other papers, such as Coulbois-Hilion \cite{CH1} and this paper.

To introduce the properties of iwip endomorphisms and the geometric index $\ind_{\mathrm{geo}}(T)$, we introduce train track maps at first.
Let $\Phi$ be an outer endomorphism of $F_n$, and $\phi\in \Phi$ an endomorphism in the following.
\subsection{Train track map}
Let $R_n:=\lor^n_{i=1}S^1$ be the wedge of $n$ copies of the circle $S^1$, we make the identification $F_n=\pi_1(R_n, \ast)$ where $\ast$ is the unique vertex of $R_n$. A \emph{marked graph} is a finite graph $G$ with a homotopy equivalence $\tau: R_n\rightarrow G$. The marking $\tau$ induces an isomorphism $\tau_\pi: F_n=\pi_1(R_n, \ast)\to \pi_1(G, \tau(\ast))$. A map (resp. homotopy equivalence) $f: G\to G$ defines an outer endomorphism (resp. automorphism) $\Phi$ of $F_n$.

For a marked graph $G$, let $V$ denote the set of vertices of $G$, and let $E$ denote the set of edges of $G$. A map $f:G\rightarrow G$ is a \emph{topological representative} for an outer endomorphism $\Phi$ of $F_n$ if:

$\bullet$ $f(V)\subseteq V$,

$\bullet$ for any $e\in E$, $f|_e$ is either locally injective, or $f(e)$ is a vertex, and

$\bullet$ $f$ induces $\Phi$.

It is clear that a topological representative always exists. A topological representative $f$ is $admissible$ if there is no invariant non-degenerate forest. An admissible topological representative $f:G\rightarrow G$ for $\Phi$ is called a \emph{train track map} if
$$[f^r(e)]=f^r(e)$$
for every edge $e$ of $G$, where $[f^r(e)]$ is the immersed path homotopic to $f^r(e)$ rel. endpoints.
The \emph{transition matrix} of a topological representative $f$ is an $m\times m$ ($m$ is the number of edges in $G$) matrix $M(f)$ whose $(i,j)$-entry is the number of times the $f$-image of $e_j$ crosses $e_i$ (in either direction). Any transition matrix is a non-negative integral matrix, and it is evident that $M(f)^r=M(f^r)$ when $f$ is a train track map.

Note that iwip endomorphisms are injective (\cite[Corollary 3.3]{R}), and there always exists a train track map for every iwip outer endomorphism $\Phi$ of $F_n$, see \cite{DV}.

\subsection{Stable tree}

In this subsection, we introduce the stable tree $T_\Phi$ for an iwip outer endomorphism $\Phi$ of $F_n$ (for more details, see Reynolds \cite[Chapter 3]{R}).

Fix a train track representative $f:G\rightarrow G$ of $\Phi$. Let $m$ be the number of edges in $G$, $\lambda_\Phi>1$ be the Perron-Frobenius eigenvalue of the transition matrix $M$ of $f$,  and $\mathbf{v}=(v_1,\ldots, v_m)$ ($v_i>0$) the normalized eigenvector associated to $\lambda_\Phi$. Equip $G$ with the \emph{Perron-Frobenius metric}:  assign the edge $e_i$ with the length $v_i$, $i=1,\ldots, m$. With this metric, the map $f$ extends the length of legal paths by the factor $\lambda_\Phi$.

The universal cover $T_0:=\widetilde{G}$ is a simplicial tree (i.e., a 1-dimensional CW-complex containing no embedded
copy of a circle $S^1$) with lifted metric. The fundamental group $F_n$ acts by deck
transformations, and thus by isometries, on $T_0$. Let $\tilde{f}:T_0\rightarrow T_0$ be a lift of $f$. Then $\tilde{f}$ is corresponding to a unique endomorphism $\phi\in \Phi$ characterized by
$$\tilde{f}(gx)=\phi(g)\tilde{f}(x), \quad g\in F_n.$$
Let $T'_k$ denote the minimal $F_n$-invariant subtree of $T_0$ with the action twisted by $\phi^k$. Then $T'_k=\tilde{f}^k(T_0)$. Define $T_k$ to be $T'_k$ with the metric rescaled by $\lambda_\Phi^{-k}$. The sequence of $F_n$-trees $(T_k)$ converges in the Gromov-Hausdorff topology to the tree $T_\Phi$, called the \emph{stable tree} of $\Phi$. The map $\tilde{f}:T_0\rightarrow T_0$ induces a sequence of maps $f_k:T_k\rightarrow T_{k+1}$, which give rise to a map $$H:=f_{\phi}:T_{\Phi}\rightarrow T_{\Phi}$$ called a $homothety$ satisfying:

$\bullet$ $Length(H[x,y])=\lambda_\Phi \cdot Length([x,y])$,

$\bullet$ $H(gx)=\phi(g)H(x)$.

For an action of $F_n$ on a tree $T$, the \emph{stabilizer} $\stab(x)$ of a point $x\in T$ is the subgroup of $F_n$ consisting of elements fixing $x$, that is,
$$\stab(x):=\{g\in F_n \mid g(x)=x\}.$$
If the stabilizer of every point in $T$ is trivial, we say that the action is \emph{free}.

Note that the stable tree $T_{\Phi^k}= T_\Phi$ for any $k>0$. Moreover,

\begin{prop}\cite[Proposition 3.21]{R}\label{2 prop}
Let $\phi:F_n\rightarrow F_n$ be an iwip endomorphism contained in the outer endomorphism $\Phi$, and $\phi\notin \aut(F_n)$. Then the stable tree $T_{\Phi}$ is simplicial with a free $F_n$-action.
\end{prop}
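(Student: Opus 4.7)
The plan is to exploit the non-surjectivity of $\phi$ in a crucial way. For iwip automorphisms, the stable tree is typically a non-simplicial $\mathbb{R}$-tree, so the argument must really use the hypothesis $\phi\notin\aut(F_n)$.

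First I would set up the approximations directly from the construction in the excerpt. Take a train track representative $f:G\to G$ of $\Phi$ with Perron-Frobenius data $(\lambda_\Phi,\mathbf{v})$ and its lift $\tilde{f}:T_0\to T_0$. Since $\phi$ is injective but not surjective, $\tilde{f}$ embeds $T_0$ as a \emph{proper} subtree $T'_1=\tilde{f}(T_0)\subsetneq T_0$, and iteration produces a nested family of subtrees $T'_k=\tilde{f}^k(T_0)$. Each $T'_k$ is simplicial (being a subtree of the simplicial tree $T_0$) and carries a free $\phi^k$-twisted $F_n$-action, inherited from the free deck transformation action on $T_0$.

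Next I would analyze the metric structure after rescaling. The Perron-Frobenius eigenvector $\mathbf{v}$ controls how edge lengths scale: iterating $\tilde{f}$ expands legal-path lengths by $\lambda_\Phi$, while the rescaling by $\lambda_\Phi^{-k}$ compensates. This should yield a uniform bound of $m$ on the number of $F_n$-orbits of edges in $T_k=\lambda_\Phi^{-k}T'_k$, together with edge lengths converging to positive limits determined by $\mathbf{v}$. Passing to the Gromov-Hausdorff limit then produces a simplicial $T_\Phi$ without edges collapsing to points---this is where non-surjectivity enters decisively, preventing the unbounded subdivision that produces $\mathbb{R}$-tree behaviour in the automorphism case.

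For freeness, I would use translation lengths. Each $T_k$ carries a free action, so every $g\in F_n\setminus\{1\}$ has translation length $\ell_k(g)>0$; these converge to the translation length $\ell_\Phi(g)$ on $T_\Phi$. The homothety relation $H(gx)=\phi(g)H(x)$ forces $\ell_\Phi(\phi(g))=\lambda_\Phi\cdot\ell_\Phi(g)$, so if some $g$ were elliptic on $T_\Phi$ then its entire forward $\phi$-orbit would consist of elliptic elements, and analyzing the conjugacy classes of these elements should produce a $\phi$-invariant structure contradicting iwip. The main obstacle I anticipate is precisely this freeness step: translation lengths can in principle collapse in Gromov-Hausdorff limits even when positive along the approximating sequence, so the iwip hypothesis must be invoked in a nontrivial way to rule out such collapse. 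The simpliciality step, by contrast, should reduce to careful Perron-Frobenius bookkeeping once one observes that the approximations $T_k$ are literally simplicial subtrees of $T_0$ with a controlled number of edge orbits.
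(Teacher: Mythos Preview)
The paper does not prove this proposition: it is imported directly from Reynolds' thesis \cite[Proposition~3.21]{R} and used as a black box throughout Section~\ref{sect. iwip}. There is thus no ``paper's own proof'' to compare against.

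That said, your outline contains a genuine error and a genuine gap. The error is the claim that ``$\tilde f$ embeds $T_0$ as a proper subtree $T'_1=\tilde f(T_0)$''. A train track map $f:G\to G$ is an immersion on each edge, but it need \emph{not} be an immersion as a map of graphs: illegal turns at vertices are allowed, and whenever one is present $\tilde f$ identifies distinct edges of $T_0$ and is not injective. Injectivity of $\phi$ on $F_n$ does not force injectivity of $\tilde f$ on $T_0$. (Indeed, producing a representative on which the induced map \emph{is} an immersion is essentially equivalent to exhibiting $G_\Phi$, which is the content of the proposition; see the paper's Example~\ref{example referee}, where the obvious representative on $R_2$ is not train track and one must pass to the theta graph to obtain an immersion.)

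The gap is in the simpliciality step. Your assertion of ``a uniform bound of $m$ on the number of $F_n$-orbits of edges in $T_k$'' is unsupported: the quotient $T_k/F_n=T'_k/\phi^k(F_n)$ is the core of the cover of $G$ associated to $\phi^k(F_n)$, and its number of simplicial edges is governed by the combinatorial lengths of $f^k$-images of edges, which grow like $\lambda_\Phi^k$. The phrase ``non-surjectivity enters decisively, preventing the unbounded subdivision'' names the desired conclusion but gives no mechanism. In the automorphism case the very same construction has $G_k\cong G$ for all $k$ and yet the limit is non-simplicial, so Perron--Frobenius bookkeeping alone cannot distinguish the two cases; the argument must identify where non-surjectivity actually intervenes (in Reynolds' treatment this is a stabilisation argument for an iterated folding process, not a direct edge count). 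Your freeness sketch is closer to correct, but ``should produce a $\phi$-invariant structure contradicting iwip'' is still a placeholder rather than a proof.
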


For an iwip endomorphism $\phi\notin \aut(F_n)$, let $G_\Phi$ be the quotient graph of $T_\Phi$ by the $F_n$-action, and let 
$$h:G_\Phi\rightarrow G_\Phi$$ be the map induced by the homothety $H$ associated to $\phi\in\Phi$. We will use the map $h$ of $G_\Phi$ to depict the index $\ind(\Phi)$ (see Theorem \ref{main thm2}).

\begin{rem}
When $\Phi$ is an outer automorphism of $F_n$, the group $\Out(F_n)$ acts on the outer space $CV_n$ and its boundary $\partial CV_n$. Recall that the compactified outer space $\overline{CV_n}=CV_n\cup \partial CV_n$ is made up of (projective classes of) $\mathbb{R}$-trees with an action of $F_n$ by isometries which is minimal and very small. See \cite{V} for a survey on outer spaces.
In \cite{LL1}, Levitt and Lustig proved that an iwip outer automorphism $\Phi$ has North-South dynamics on $\partial CV_n$: it has a unique attracting fixed tree $T_{\Phi}$ and a unique repelling fixed tree $T_{\Phi^{-1}}$ in the boundary $\partial CV_n$:
$$T_{\Phi}\cdot \Phi=\lambda_{\Phi}T_{\Phi} \quad and \quad T_{\Phi^{-1}}\cdot \Phi=\frac{1}{\lambda_{\Phi^{-1}}}T_{\Phi^{-1}}$$
where $\lambda_{\Phi}>1$ is the \emph{expansion factor }of $\Phi$. %See \cite{CH1, GJLL} for an explicit construction of $T_\Phi$.
\end{rem}

\subsection{Geometric index}
Gaboriau and Levitt \cite{GL} introduced an index for a tree $T$ in the compactification of the outer space $\overline{CV_n}$, we call it \emph{geometric index} denoted $\ind_{\mathrm{geo}}(T)$. It is defined by using the valence of the branch points in $T$, with an action of $F_n$ by isometries. Moreover, they proved
\begin{equation}\label{def of geo ind}
\ind_{\mathrm{geo}}(T)=\sum_{[x]\in T/F_n}\ind_{\mathrm{geo}}(x)\leq 2n-2,
\end{equation}
where the local index of a branch point $x$ in $T$ is
$$\ind_{\mathrm{geo}}(x)=\#(\pi_0(T\setminus \{x\})/\stab(x))+2\rk\stab(x)-2\geq 1.$$
Moreover, the equality $\ind_{\mathrm{geo}}(T)=2n-2$ holds if and only if the tree $T$ is geometric:

\begin{defn}
An $\mathbb{R}$-tree is called \emph{geometric} if it is transverse to a measured foliation on a finite CW-complex.
\end{defn}

\begin{rem}
For another way to define a geometric tree, see \cite{GL}.
\end{rem}

For example, we have

\begin{prop}\label{2n-2 for endo}
Let $T_\Phi$ be the stable tree of an iwip outer endomorphism $\Phi$ of $F_n$ and $\Phi\notin \out(F_n)$. Then $T_\Phi$ is geometric, and
$$\ind_{\mathrm{geo}}(T_\Phi)=2n-2.$$
Moreover, there are at most $2n-2$ orbits of branch points, and for every branch point $x\in T_\Phi$, we have $\#\pi_0(T_\Phi\setminus \{x\})\leq 2n$.
\end{prop}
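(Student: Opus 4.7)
The plan is to reduce the geometric index computation to an Euler-characteristic calculation on the finite quotient graph $G_\Phi=T_\Phi/F_n$, then to read off geometricity from the criterion (stated just before the proposition) that $\ind_{\mathrm{geo}}(T)=2n-2$ is equivalent to $T$ being geometric.

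First, because $\Phi\notin\out(F_n)$, Proposition \ref{2 prop} guarantees that $T_\Phi$ is simplicial and that the $F_n$-action on $T_\Phi$ is free. Minimality of $T_\Phi$ together with the finite generation of $F_n$ ensures that $G_\Phi$ is a finite connected graph with no valence-$1$ vertex, and freeness of the action makes $T_\Phi\to G_\Phi$ a universal covering, so $\pi_1(G_\Phi)\cong F_n$ and $\chi(G_\Phi)=|V(G_\Phi)|-|E(G_\Phi)|=1-n$.

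Next, I compute the index directly from formula \eqref{def of geo ind}. Freeness gives $\stab(x)=\{1\}$ and $\rk\stab(x)=0$ for every $x\in T_\Phi$, so the local formula simplifies to
$$\ind_{\mathrm{geo}}(x)=\#\pi_0(T_\Phi\setminus\{x\})-2,$$
which, at a point projecting to a vertex $v\in G_\Phi$, equals $\mathrm{val}(v)-2$. Summing over $F_n$-orbits of points of $T_\Phi$ (only vertices of $G_\Phi$ can contribute, and valence-$2$ vertices contribute $0$) gives
$$\ind_{\mathrm{geo}}(T_\Phi)=\sum_{v\in V(G_\Phi)}\bigl(\mathrm{val}(v)-2\bigr)=2|E(G_\Phi)|-2|V(G_\Phi)|=-2\chi(G_\Phi)=2n-2.$$
The Gaboriau--Levitt equivalence recalled in the text then forces $T_\Phi$ to be geometric.

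Finally, the two cardinality claims are immediate consequences of the formula. Every branch point has $\ind_{\mathrm{geo}}(x)\geq 1$, so the decomposition $\ind_{\mathrm{geo}}(T_\Phi)=2n-2$ into nonnegative orbit contributions admits at most $2n-2$ branch point orbits. Moreover, for any single branch point $x$, all remaining orbit contributions are $\geq 0$, so $\ind_{\mathrm{geo}}(x)\leq 2n-2$, which together with $\ind_{\mathrm{geo}}(x)=\#\pi_0(T_\Phi\setminus\{x\})-2$ yields $\#\pi_0(T_\Phi\setminus\{x\})\leq 2n$. There is no serious obstacle; the only mild care point is verifying that the free, minimal, simplicial $F_n$-action on $T_\Phi$ really does have finite cocompact quotient with the claimed Euler characteristic, after which the proposition is pure bookkeeping.
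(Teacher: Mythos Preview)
Your argument is correct, and the final cardinality deductions match the paper's exactly. The route to $\ind_{\mathrm{geo}}(T_\Phi)=2n-2$ is, however, genuinely different from the paper's.

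The paper argues in the opposite direction along the Gaboriau--Levitt equivalence. From Proposition~\ref{2 prop} it knows $T_\Phi$ is simplicial with a free $F_n$-action; it then invokes \cite[Example~II.5]{GL}, which says a minimal simplicial $F_n$-action is geometric if and only if every edge stabilizer has finite rank. Freeness makes all edge stabilizers trivial, hence $T_\Phi$ is geometric, and \cite[Theorem~III.2]{GL} then gives $\ind_{\mathrm{geo}}(T_\Phi)=2n-2$. You instead first compute the index directly---passing to the finite quotient $G_\Phi$, using that $\stab(x)=\{1\}$ reduces the local index to $\mathrm{val}(v)-2$, and summing to $-2\chi(G_\Phi)=2n-2$---and only then read off geometricity from the same equivalence. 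Your approach is more self-contained (it avoids the extra citation to \cite[Example~II.5]{GL} and makes the $2n-2$ transparently an Euler-characteristic identity), at the mild cost of needing to justify that the minimal, free, simplicial $F_n$-action has finite quotient with $\chi(G_\Phi)=1-n$; the paper's approach is shorter because it outsources that work to \cite{GL}.
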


\begin{proof}
Proposition \ref{2 prop} shows that the tree $T_{\Phi}$ is free, namely, the stabilizer of every point in $T_{\Phi}$ is trivial.
From \cite[Theorem III.2]{GL}, the geometric index of a tree equals to $2n-2$ if and only if the tree is geometric. Note that \cite[Example II.5]{GL} shows that a minimal simplicial $F_n$-action is geometric if
and only if every edge stabilizer has finite rank, so $T_{\Phi}$ is geometric, and hence $\ind_{\mathrm{geo}}(T_\Phi)=2n-2$. It follows
$$1\leq \ind_{\mathrm{geo}}(x)=\#\pi_0(T_\Phi\setminus \{x\})-2 \leq 2n-2$$
for every branch point $x\in T_\Phi$. So the assertion holds.
\end{proof}

\section{Proof of Theorem \ref{main thm2}}\label{sect mani results}

In this section, we focus on iwip endomorphisms and prove one of the main results of this paper.

Let $\phi: F_n\to F_n$ be an iwip endomorphism, and let $\Phi$ be the iwip outer endomorphism containing $\phi$.
Recall that the stable tree $T_\Phi$ in Proposition \ref{2 prop} has the same properties of the tree constructed in \cite[Theorem 2.1]{GJLL}. The homothety $H: T_\Phi\to T_\Phi$ associated to $\phi$ satisfies:
\begin{equation}\label{homothety equa.}
\phi(w)H=Hw,\quad w\in F_n,
\end{equation}
with the expansion factor $\lambda_{\Phi}>1$.  So the homothety $H$ has at most one fixed point, which is called the \emph{center} $C_H$ of $H$ if a fixed point exists. (Note that if $\phi\in\aut(F_n)$, the homothety $H$ with $\lambda_{\Phi}>1$ has a unique fixed point in the metric completion of $T_\Phi$, but we insist that the
fixed point is in $T_\Phi$. If $\phi\notin\aut(F_n)$, then $H$ is not invertible and should really be
called a ``homothetic embedding". Even though $T_\Phi$ is simplicial in this case, a center may fail to exist, see Example \ref{example referee} below.) As in \cite[Section 2]{GJLL}, we have the following observation:

\begin{lem}\label{property of Homothety}
If a homothety $H_{\phi}$ associates to an iwip endomorphism $\phi\in \Phi$ and $C_{H_\phi}$ exists, then the homothety $H_{\psi}=mH_{\phi}$ associates to $\psi=i_m\comp\phi$ (recall that $i_m(g)=mgm^{-1}$). For two isogredient iwip endomorphisms $\psi=i_c\comp\phi\comp i_{c^{-1}}=i_{c\phi(c^{-1})}\comp\phi$, we have
$$H_{\psi}=c\phi(c^{-1})H_{\phi}, \quad C_{H_{\psi}}=\fix(H_{\psi})=c(\fix(H_{\phi}))=c(C_{H_{\phi}}).$$
In other words, centers of homotheties of two isogredient iwip endomorphisms are in the same orbit of $F_n$.
\end{lem}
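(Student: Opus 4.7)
The plan is to read off both assertions from the defining equation \eqref{homothety equa.}, which characterizes $H_\phi$ by $\phi(w)H_\phi=H_\phi w$ for all $w\in F_n$. For the first statement, I would simply verify the defining equation with $\psi=i_m\comp\phi$ and the candidate map $H_\psi:=mH_\phi$ (where multiplication by $m$ denotes the action by the deck transformation on $T_\Phi$). Namely, for any $w\in F_n$,
$$\psi(w)H_\psi=(m\phi(w)m^{-1})(mH_\phi)=m\phi(w)H_\phi=mH_\phi w=H_\psi w,$$
so $H_\psi$ has the correct equivariance with $\psi$ and is still a homothety (composition of an isometry with $H_\phi$) with the same expansion factor $\lambda_\Phi>1$. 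Hence by uniqueness of the homothety associated to $\psi$, $H_\psi=mH_\phi$.

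For the second statement, apply the first with $m=c\phi(c^{-1})$, since by definition $i_c\comp\phi\comp i_{c^{-1}}=i_{c\phi(c^{-1})}\comp\phi$; this immediately gives $H_\psi=c\phi(c^{-1})H_\phi$. It remains to locate the center. I would show $c\cdot C_{H_\phi}$ is fixed by $H_\psi$. Using the defining identity $H_\phi(cx)=\phi(c)H_\phi(x)$ (which is just \eqref{homothety equa.} read on a point),
$$H_\psi(cC_{H_\phi})=c\phi(c^{-1})\cdot H_\phi(cC_{H_\phi})=c\phi(c^{-1})\phi(c)H_\phi(C_{H_\phi})=cC_{H_\phi}.$$

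Finally I would invoke the fact stated just before the lemma: a homothety with expansion factor $\lambda_\Phi>1$ can have at most one fixed point in $T_\Phi$. Since $cC_{H_\phi}$ is such a fixed point, it must coincide with $C_{H_\psi}=\fix(H_\psi)$, giving the desired equality $C_{H_\psi}=c(C_{H_\phi})$ and showing that centers of homotheties of isogredient iwip endomorphisms lie in a single $F_n$-orbit. The argument is essentially a direct calculation; the only point that deserves care is the use of uniqueness of the center (which fails in the metric completion for $\phi\notin\aut(F_n)$, but the statement assumes $C_{H_\phi}$ already exists in $T_\Phi$, so this is not an obstacle here).
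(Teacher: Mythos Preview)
Your proposal is correct and is precisely the direct verification the paper has in mind: the paper does not give its own proof but simply records the lemma ``as in \cite[Section 2]{GJLL}'', and your computation from the defining identity \eqref{homothety equa.} is exactly how one checks it. One minor remark: you need not invoke any abstract ``uniqueness of the homothety associated to $\psi$'' in the first part---the lemma is really \emph{defining} $H_\psi:=mH_\phi$ and asserting it satisfies the required equivariance, which is what you verified; the only uniqueness used is that of the fixed point (for the center), and you handle that correctly.
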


\begin{lem}\label{H free}
	Let $C_H$ be the center of the homothety $H$ associated to an iwip endomorphism $\phi$. Then $\fix\phi\subset \stab(C_H)$. If $C_H$ doesn't exist, then $\fix\phi$ is trivial.
\end{lem}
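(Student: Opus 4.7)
The plan is to handle the two assertions separately, the first being essentially formal and the second splitting into a hyperbolic subcase (which rules itself out) and an elliptic subcase (which requires more care).

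For $\fix\phi \subseteq \stab(C_H)$, I would start from the defining relation (\ref{homothety equa.}). If $w \in \fix\phi$, then $\phi(w)=w$, so $\phi(w)H = Hw$ collapses to $wH = Hw$. Applying both sides to $C_H$ and using $H(C_H)=C_H$ gives $w(C_H) = H(w(C_H))$, so that $w(C_H)$ is also a fixed point of $H$. Since $\lambda_\Phi > 1$, any two fixed points $x,y$ of $H$ satisfy $d(x,y) = d(H(x),H(y)) = \lambda_\Phi\, d(x,y)$, forcing $x=y$; hence $w(C_H) = C_H$ and $w \in \stab(C_H)$.

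For the second assertion I would argue by the contrapositive: given $w \in \fix\phi$ with $w\ne 1$, I aim to show that $H$ admits a fixed point in $T_\Phi$. The same computation shows $H$ commutes with $w$, and by applying it to $w^{-1} \in \fix\phi$ also with $w^{-1}$. Now split on how $w$ acts on $T_\Phi$. In the hyperbolic subcase, let $A_w$ be the axis of $w$ with translation length $\ell_w > 0$. Commutation with $w^{\pm 1}$ makes $H(A_w)$ a $\langle w\rangle$-invariant bi-infinite geodesic; since $A_w$ is the unique minimal such subtree, $A_w \subseteq H(A_w)$, and two bi-infinite geodesics of an $\mathbb{R}$-tree related by inclusion must coincide, so $H(A_w)=A_w$. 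Parametrizing $A_w \cong \mathbb{R}$ with $w(t)=t+\ell_w$ and $H(t)=\lambda_\Phi t + c$, the relation $wH = Hw$ collapses to $\ell_w = \lambda_\Phi \ell_w$, a contradiction. So no nontrivial element of $\fix\phi$ can be hyperbolic, and by Proposition \ref{2 prop} this already settles the case $\phi\notin \aut(F_n)$ (where the $F_n$-action on $T_\Phi$ is free, so every nontrivial element is hyperbolic).

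In the remaining elliptic subcase (only possible when $\phi \in \aut(F_n)$), the fixed subtree $T_w := \fix(w)$ is a nonempty closed subtree of $T_\Phi$, preserved by $H$ (since for $x \in T_w$, $w(H(x)) = H(w(x)) = H(x)$) and also by $H^{-1}$ (using that $\phi \in \aut(F_n)$ makes $H$ invertible). Then $H|_{T_w}$ is a bijective self-homothety of $T_w$ with ratio $\lambda_\Phi > 1$, and applying Banach's fixed-point theorem to the contraction $H^{-1}|_{T_w}$ would produce a unique fixed point in $T_w \subseteq T_\Phi$ serving as $C_H$. The hard part will be precisely this last step: the remark preceding the lemma warns that for $\phi \in \aut(F_n)$ the homothety $H$ always has a center in the metric completion $\overline{T_\Phi}$ but can a priori fall outside $T_\Phi$, so the delicate task is to ensure that the Cauchy limit of $H^{-n}(x)$ does not escape $T_w$. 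I expect this to follow from closedness of $T_w$ in $T_\Phi$ combined with the minimal/very-small structure of the stable tree of an iwip automorphism.
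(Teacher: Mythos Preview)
Your argument for the first assertion is essentially the paper's: apply the equivariance relation with $\phi(w)=w$ to the point $C_H$ and use that a homothety of ratio $\lambda_\Phi>1$ has at most one fixed point.

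For the second assertion your route is quite different from the paper's. The paper argues in one line: it asserts that if $C_H$ fails to exist then $\phi\notin\aut(F_n)$, and then invokes \cite[Proposition~3.11]{R} (an iwip non-automorphism has trivial fixed subgroup) to finish. Your hyperbolic-subcase argument is correct and, together with the freeness of the $F_n$-action from Proposition~\ref{2 prop}, gives a self-contained proof of exactly the statement being quoted from Reynolds --- so your approach buys independence from that external reference. On the other hand, your elliptic subcase (only relevant when $\phi\in\aut(F_n)$) is not needed in the paper's framework: there the hypothesis ``$C_H$ does not exist'' is taken to exclude the automorphism case outright, so nothing remains to prove. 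The difficulty you flag about the Banach limit possibly landing in $\overline{T_\Phi}\setminus T_\Phi$ is a genuine one, but the paper never confronts it because it never enters that subcase.
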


\begin{proof}
If $w\in \fix\phi$, i.e., $\phi(w)=w$, then by Equation \ref{homothety equa.}, we have
$$H(wC_H)=\phi(w)H(C_H)=wC_H,$$
which implies that $wC_H=C_H$ and hence $w\in\stab(C_H)$. If $C_H$ doesn't exist, we have $\phi\notin \aut(F_n)$, and hence $\fix\phi$ is trivial by \cite[Proposition 3.11]{R}.
\end{proof}

As a corollary, we have

\begin{prop}\label{1 cor}
Let $\phi$ be an iwip endomorphism but $\phi\notin \aut(F_n)$. Then $\fix\phi$ is trivial, and hence $a(\phi)=\#\mathcal A(\phi)$.
Moreover, $\partial \phi$ has at most $2n$ attracting fixed points.
\end{prop}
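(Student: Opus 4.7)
The plan is to leverage the stable tree $T_\Phi$ and the homothety $H: T_\Phi \to T_\Phi$ associated to $\phi$, using Proposition \ref{2 prop} and Proposition \ref{2n-2 for endo} as the key tools.

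First I would dispose of the claim that $\fix\phi$ is trivial by a case split on whether the center $C_H$ exists. If $C_H$ exists, Lemma \ref{H free} gives $\fix\phi \subset \stab(C_H)$; since $\phi \notin \aut(F_n)$, Proposition \ref{2 prop} guarantees the $F_n$-action on $T_\Phi$ is free, forcing $\stab(C_H) = \{1\}$. If $C_H$ does not exist, Lemma \ref{H free} already yields the triviality. Either way, $\fix\phi$ is trivial, and the identity $a(\phi) = \#(\mathcal A(\phi)/\fix\phi) = \#\mathcal A(\phi)$ is immediate.

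For the bound $\#\mathcal A(\phi) \leq 2n$, the key step is to interpret attracting fixed points of $\partial\phi$ as directions in $T_\Phi$ fixed by $H$. The free $F_n$-action on the simplicial tree $T_\Phi$ provides an $F_n$-equivariant identification $\partial T_\Phi \cong \partial F_n$ under which $\partial\phi$ is intertwined with the induced action of $H$ on ends. If $C_H$ exists, every $H$-attracting fixed end is the limit of a ray based at $C_H$ lying inside an $H$-invariant component of $T_\Phi \setminus \{C_H\}$; conversely each $H$-invariant component gives one such fixed end since $H$ expands by $\lambda_\Phi > 1$. Thus $\#\mathcal A(\phi) \leq \#\pi_0(T_\Phi \setminus \{C_H\})$. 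By Proposition \ref{2n-2 for endo}, this is at most $2n$ when $C_H$ is a branch point, while for $C_H$ of valence $\leq 2$ the bound is trivial. If $C_H$ does not exist at all, then $H$ is an expanding homothetic embedding with no fixed point in $T_\Phi$, and such a map fixes at most one end at infinity (the unique asymptote of every forward orbit), so $\#\mathcal A(\phi) \leq 1 \leq 2n$.

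The main obstacle is making precise the correspondence between attracting fixed points of $\partial\phi$ and $H$-fixed directions at $C_H$, which requires careful use of the free $F_n$-action to identify $\partial T_\Phi$ with $\partial F_n$ and a verification that $\partial\phi$ is intertwined with the boundary action of $H$. The subcase where $C_H$ is absent from $T_\Phi$ is also delicate, but should reduce to an analysis of the unique attracting asymptotic behavior of the expansive embedding $H$.
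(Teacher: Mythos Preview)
Your argument for the triviality of $\fix\phi$ matches the paper's exactly: Lemma \ref{H free} combined with the freeness of the $F_n$-action on $T_\Phi$ from Proposition \ref{2 prop}.

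For the bound $\#\mathcal A(\phi)\leq 2n$, however, the paper takes a much shorter route than you do. Once $\fix\phi$ is trivial, the paper simply invokes Theorem \ref{ZZ1 thm2.6'}: $\ind(\phi)=\rk\fix\phi+\tfrac{1}{2}a(\phi)-1=\tfrac{1}{2}a(\phi)-1\leq n-1$, giving $a(\phi)=\#\mathcal A(\phi)\leq 2n$ immediately. Your route---identifying $\partial T_\Phi$ with $\partial F_n$, intertwining $\partial\phi$ with the boundary action of $H$, and matching attracting fixed points with $H$-invariant components of $T_\Phi\setminus\{C_H\}$---is essentially the content of Lemma \ref{inj of j}, Proposition \ref{3 prop}, and Proposition \ref{key prop}(1)--(2), all of which appear \emph{after} this proposition in the paper. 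Your approach is correct and self-contained (none of those later results logically depends on Proposition \ref{1 cor}), and it gives finer information: you actually get $\#\mathcal A(\phi)\leq\#\pi_0(T_\Phi\setminus\{C_H\})$, which is what is needed later anyway. The paper's approach buys brevity and avoids the delicate boundary identification at this stage, at the cost of appealing to the black-box inequality of Theorem \ref{ZZ1 thm2.6'}.
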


\begin{proof}
Recall that $a(\phi)$ denotes the number of orbits of $\fix\phi$ acting on $\mathcal A(\phi)$. By Proposition \ref{2 prop}, the stable tree $T_\Phi$ is free, so the stabilizer $\stab(C_H)$ is trivial. Then $\fix\phi$ is trivial according to Lemma \ref{H free}, and hence, every orbit in $\mathcal A(\phi)$ consists of a unique attracting fixed point. Moreover, since $\ind(\phi)\leq n-1$ according to Theorem \ref{ZZ1 thm2.6'}, $\partial\phi$ has at most $2n$ attracting fixed points.
\end{proof}

For an iwip endomorphism $\phi:F_n\rightarrow F_n$  and $\phi\notin \aut(F_n)$, since the stable tree $T_{\Phi}$ is free, we can extend the result in \cite[Lemma 3.5]{GJLL} to iwip endomorphisms without modification:

\begin{lem}\label{inj of j}
Let $\phi:F_n\rightarrow F_n$ be an iwip endomorphism but $\phi\notin \aut(F_n)$. Then there is an $F_n$-equivariant injective map
$$j: \partial T_{\Phi}\rightarrow \partial F_n.$$
If $H$ is a homothety of $T_\Phi$ as in Equation \ref{homothety equa.}, then $\phi(j(\rho))=j(H(\rho))$ for any ray $\rho\in\partial T_{\Phi}$.
\end{lem}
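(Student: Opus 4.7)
The plan is to mimic the construction of \cite[Lemma 3.5]{GJLL}, using that in the iwip endomorphism setting Proposition \ref{2 prop} supplies exactly the hypothesis that makes their argument go through verbatim: $T_{\Phi}$ is simplicial and the $F_{n}$-action on $T_{\Phi}$ is free. Since the quotient $G_{\Phi}$ is the finite graph used in Proposition \ref{2n-2 for endo}, the action is also cocompact, and hence $T_{\Phi}$ is $F_{n}$-equivariantly quasi-isometric to the Cayley graph of $F_{n}$, which provides the bridge between $\partial T_{\Phi}$ and $\partial F_{n}$.

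First I would fix a base vertex $x_{0}\in T_{\Phi}$ and, for each end $\rho\in \partial T_{\Phi}$, take the unique geodesic ray from $x_{0}$ representing $\rho$. I would then choose a sequence of vertices $v_{k}$ on this ray with $d_{T_{\Phi}}(x_{0},v_{k})\to \infty$. Freeness of the action gives a unique sequence $g_{k}\in F_{n}$ with $v_{k}=g_{k}\cdot x_{0}$; cocompactness makes $(g_{k})$ a quasi-geodesic ray in the Cayley graph of $F_{n}$, which therefore converges to some point of $\partial F_{n}$ depending only on $\rho$. I define $j(\rho)$ to be this limit.

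Next I would verify the three required properties. The $F_{n}$-equivariance $j(h\rho)=h\cdot j(\rho)$ is immediate, as the translate $h\rho$ passes through $hg_{k}\cdot x_{0}$, so its associated sequence is $(hg_{k})$. Injectivity comes from the simplicial tree structure: distinct ends $\rho\neq \rho'$ of $T_{\Phi}$ are eventually separated inside $T_{\Phi}$, so the two quasi-geodesic sequences $(g_{k})$ and $(g_{k}')$ have distinct limits in $\partial F_{n}$. For the intertwining relation, the ray $H(\rho)$ passes through the vertices $H(g_{k}\cdot x_{0})=\phi(g_{k})\cdot H(x_{0})$ by equation (\ref{homothety equa.}), and since $H$ is an expansion these points tend to infinity along $H(\rho)$; after a bounded adjustment accounting for the difference between $x_{0}$ and $H(x_{0})$ (which changes nothing at infinity), the associated sequence of the ray $H(\rho)$ in $F_{n}$ is $(\phi(g_{k}))$, whose limit in $\partial F_{n}$ is $\partial \phi (j(\rho))$, i.e.\ $\phi(j(\rho))$.

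The main obstacle is the convergence step: showing that $(g_{k})$ really does converge in $\partial F_{n}$ and that the limit is independent of the sequence of vertices $v_{k}$ chosen along the ray. This is precisely where freeness and cocompactness of the $F_{n}$-action on $T_{\Phi}$ are used, together with the fact that $\phi$ is injective (\cite[Corollary 3.3]{R}), so that $\partial \phi$ is a well-defined continuous map on $\partial F_{n}$ and the intertwining even makes sense. Once these ingredients are in place, $F_{n}$-equivariance, injectivity, and the intertwining with $\phi$ follow by the same arguments as in \cite[Lemma 3.5]{GJLL}, without requiring invertibility of $\phi$.
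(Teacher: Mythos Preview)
Your proposal is correct and follows essentially the same approach as the paper: the paper simply observes that since $T_{\Phi}$ is free (Proposition~\ref{2 prop}), the construction and argument of \cite[Lemma~3.5]{GJLL} go through ``without modification,'' and you have spelled out exactly that construction. The only minor imprecision is that an arbitrary vertex $v_{k}$ on the ray need not lie in the $F_{n}$-orbit of $x_{0}$, but cocompactness lets you replace $v_{k}$ by a nearby orbit point, which is the ``bounded adjustment'' you already invoke.
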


Following \cite{GJLL}, we have the following definition:

If the homothety $H: T_\Phi\to T_\Phi$ sends a component $A$ of $T_{\Phi}\setminus C_H$ to itself, then $A$ contains a unique $H$-invariant ray $\rho$ at the center $C_H$ of $H$, that is, $\rho(\lambda t)=H\rho(t)$ for $t>0$. Such rays are called \emph{eigenrays} of $H$. Note that the closure $\bar{\rho}$ contains $[C_H,a]\cap [C_H,H(a)]$ for every $a\in A$. Moreover,

\begin{prop}\label{3 prop}
	If $X\in F_n$ is an attracting fixed point of an iwip endomorphism $\phi$ of $F_n$ but $\phi\notin \aut(F_n)$, $H$ is the homothety associated to $\phi$ and $C_H$ exists, then there exists a unique eigenray $\rho$ of $H$ such that $X=j(\rho)$. If $C_H$ doesn't exist, then there is a unique attracting
fixed point in $\partial F_n$.
\end{prop}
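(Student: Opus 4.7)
I would treat the two cases separately, adapting the argument of Gaboriau-Jaeger-Levitt-Lustig \cite{GJLL} from automorphisms to iwip endomorphisms by combining the tools assembled in Section \ref{sect. iwip}: freeness of the $F_n$-action on $T_\Phi$ (Proposition \ref{2 prop}), the valence bound (Proposition \ref{2n-2 for endo}), and the equivariant injection $j:\partial T_\Phi\to\partial F_n$ intertwining $\partial H$ with $\partial\phi$ (Lemma \ref{inj of j}).

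\textbf{Case 1: $C_H$ exists.} The plan is to establish a bijection $\rho\mapsto j(\rho(\infty))$ between eigenrays of $H$ at $C_H$ and attracting fixed points of $\partial\phi$. The forward direction is almost immediate: an eigenray satisfies $H(\rho(t))=\rho(\lambda_\Phi t)$, so $\rho(\infty)$ is $\partial H$-fixed, and Lemma \ref{inj of j} transfers this to $\partial\phi(j(\rho(\infty)))=j(\rho(\infty))$. To certify attraction, let $A$ be the component of $T_\Phi\setminus\{C_H\}$ containing $\rho$, which is necessarily $H$-invariant; pick $u\in F_n$ with $u\cdot C_H\in A$ (possible by freeness and minimality of the $F_n$-action); then $\phi^k(u)\cdot C_H = H^k(u\cdot C_H)$ remains in $A$ and the segment $[C_H,H^k(u\cdot C_H)]$ agrees with $\rho$ for a length $\sim\lambda_\Phi^k$, so it converges to $\rho(\infty)$ in $\partial T_\Phi$, which via equivariance of $j$ gives $\phi^k(u)\to j(\rho(\infty))$ in $\partial F_n$.

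For the converse, given attracting $X$ with $\phi^k(u)\to X$, I would track the sequence of directions at $C_H$ into which $H^k(u\cdot C_H)$ enters the tree. By Proposition \ref{2n-2 for endo} there are at most $2n$ such directions, so the sequence is eventually periodic, and replacing $\phi$ by a power (using $T_{\Phi^k}=T_\Phi$) yields an $H$-invariant component $A'$. Each such component contains a unique eigenray $\rho'$ (uniqueness via a short argument: two eigenrays in $A'$ would share initial segments that lengthen under $H$, forcing them to coincide), and $H^k(u\cdot C_H)\to\rho'(\infty)$, whence $X=j(\rho'(\infty))$. Uniqueness of the eigenray attached to $X$ follows from injectivity of $j$.

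\textbf{Case 2: $C_H$ does not exist.} Here $H$ is a fixed-point-free $\lambda_\Phi$-homothetic embedding of the simplicial tree $T_\Phi$. I would show that for every $a\in T_\Phi$ the iterates $H^k(a)$ converge to a single end $\xi\in\partial T_\Phi$ independent of $a$, by comparing $[a,H(a)]$ with $[H(a),H^2(a)]$ and arguing that the absence of a fixed point together with the $\lambda_\Phi$-expansion forces these segments to align at their common endpoint $H(a)$; inductively the concatenated path $[a,H(a),H^2(a),\ldots]$ is a ray. Then $X:=j(\xi)$ is $\partial\phi$-fixed, and attracting because for any $u$, $\phi^k(u)\cdot a=H^k(u\cdot a)\to\xi$ gives $\phi^k(u)\to j(\xi)=X$. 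Uniqueness is immediate: any other attracting $X'$ has some $u'$ with $\phi^k(u')\cdot a=H^k(u'\cdot a)\to j^{-1}(X')$, forcing $j^{-1}(X')=\xi$.

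\textbf{Main obstacle.} The subtlest step is Case 1's converse, where one must control how $H$ permutes the (possibly many) directions at $C_H$ and ensure that passing to a power to achieve invariance of a direction does not lose the attracting orbit $\phi^k(u)\to X$; the valence bound from Proposition \ref{2n-2 for endo} is precisely what makes eventual periodicity available. The core geometric claim in Case 2, that a fixed-point-free expanding embedding of an $\mathbb R$-tree has a unique attracting end, is conceptually standard but needs careful verification because $H$ is not a homeomorphism.
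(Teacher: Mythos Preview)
Your approach is more ambitious than the paper's: you try to give a self-contained geometric argument, whereas the paper disposes of both cases by citation. For Case~1 the paper simply notes that $\stab(C_H)$ is trivial (Proposition~\ref{2 prop}) and invokes \cite[Proposition~4.3]{GJLL} together with Lemma~\ref{inj of j}; for Case~2 it uses Lemma~\ref{H free} to get $\fix\phi=\{1\}$ and then appeals to \cite[Theorem~3.4]{ZZ2} to conclude $a(\phi)=1$. So what you are really doing is reproving those external statements in the present setting.

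There is a genuine gap in your Case~1 converse. When you pass to a power of $\phi$ to obtain an invariant component, you end up with an eigenray of $H^m$, not of $H$; the statement demands an eigenray of $H$ itself. The repair is not to chase the orbit $H^k(u\cdot C_H)$ through the finitely many directions, but to use directly that $X$ is $\partial\phi$-fixed: once your orbit argument (or any argument) shows $X=j(\xi)$ for some $\xi\in\partial T_\Phi$, the intertwining relation $\partial\phi\circ j=j\circ\partial H$ and the injectivity of $j$ force $\partial H(\xi)=\xi$, and then the geodesic ray $[C_H,\xi)$ is automatically the $H$-eigenray, with no passage to a power needed. Relatedly, note that $H$ is an injective self-map of the finite set of directions at $C_H$, hence a permutation, so ``eventually periodic'' is actually ``periodic''; but this alone still does not give period~$1$.

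Your Case~2 is a genuinely different route. The paper's argument is purely algebraic (triviality of $\fix\phi$ plus the index formula from \cite{ZZ2}), while you argue dynamically on $T_\Phi$ that a fixed-point-free $\lambda_\Phi$-homothetic embedding has a unique attracting end. Your sketch is reasonable, but the step ``the absence of a fixed point together with $\lambda_\Phi$-expansion forces $[a,H(a)]$ and $[H(a),H^2(a)]$ to align'' is exactly where the work lies: since $H$ is not surjective, the usual displacement-function arguments for isometries or homeomorphisms do not apply verbatim, and you would need to rule out that the concatenation backtracks (e.g.\ by showing the overlap would produce an $H$-fixed point). The paper's citation avoids this entirely; your approach, if completed, would make the proof independent of \cite{ZZ2}.
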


\begin{proof}
If there exists $C_H$, note that $\stab(C_H)$ is trivial, then the above conclusion follows from \cite[Proposition 4.3]{GJLL} and Lemma \ref{inj of j} directly. If $C_H$ doesn't exist, then $\fix\phi$ is trivial from Lemma \ref{H free}, and hence $a(\phi)=1$ by \cite[Theorem 3.4]{ZZ2}, that is, there is a unique attracting
fixed point in $\partial F_n$.
\end{proof}

Now we have a key proposition as follows.

\begin{prop}\label{key prop}
Let $\Phi$ be an iwip outer endomorphism of $F_n$ but $\Phi\notin \out(F_n)$, and let $H: T_\Phi\to T_\Phi$ be the homothety associated to an iwip endomorphism $\phi\in \Phi$. If $C_H$ exists, then
\begin{enumerate}%[leftmargin=*]%quxiao suojin
\item  there exists an injective map $\tau:\mathcal{A}(\phi)\rightarrow \pi_0(T_{\Phi}\setminus C_H)$, with image
$$\tau(\mathcal{A}(\phi))=\pi_0^H(T_{\Phi}\setminus C_H),$$
where $\pi_0(T_{\Phi}\setminus C_H)$ is the set of components of $T_{\Phi}\setminus C_H$, and $\pi_0^H(T_{\Phi}\setminus C_H)\subset\pi_0(T_{\Phi}\setminus C_H)$ is the set of components fixed by $H$;

  \item $a(\phi)=\#\pi_0^H(T_{\Phi}\setminus C_H)$;

  \item there exists a positive power $\phi^k$ such that $a(\phi^k)=\#\pi_0(T_{\Phi}\setminus C_H)$, and hence
  $$2\ind(\phi^k)=\ind_{\mathrm{geo}}(C_H);$$

  \item the periodic points of $\partial\phi$ are one-to-one corresponding to the components of $T_{\Phi}\setminus C_H$, and hence $\partial\phi$ has at most $2n$ periodic points.
\end{enumerate}
 Otherwise, if $C_H$ doesn't exist, then there is a unique periodic point in $\partial F_n$ and $a(\phi^k)=1$ for all $k\geq 1.$
\end{prop}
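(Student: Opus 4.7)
The plan is to set up, under the hypothesis $\Phi\notin\out(F_n)$, an $F_n$-equivariant dictionary between attracting fixed points of $\partial\phi$ and $H$-fixed components of $T_\Phi\setminus C_H$, built from Proposition \ref{3 prop} together with Lemma \ref{inj of j}. Since $\phi\notin\aut(F_n)$, Proposition \ref{2 prop} makes the $F_n$-action on $T_\Phi$ free, so $\stab(C_H)=\{1\}$, and Proposition \ref{1 cor} gives $\fix\phi=\{1\}$ together with $a(\phi)=\#\mathcal{A}(\phi)$.

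For parts (1) and (2), define $\tau(X)$ to be the component of $T_\Phi\setminus C_H$ containing the unique eigenray $\rho_X$ of $H$ with $j(\rho_X)=X$ supplied by Proposition \ref{3 prop}; by the definition of an eigenray this component is $H$-fixed. The uniqueness of the eigenray in each $H$-fixed component (noted in the definition preceding Proposition \ref{3 prop}) makes $\tau$ injective with image contained in $\pi_0^H(T_\Phi\setminus C_H)$. For the reverse inclusion, given $A\in\pi_0^H(T_\Phi\setminus C_H)$ take its unique eigenray $\rho\subset A$: Lemma \ref{inj of j} ensures $j(\rho)\in\partial F_n$ is fixed by $\partial\phi$, it lies outside $\partial\fix\phi=\emptyset$, and choosing a hyperbolic $u\in F_n$ whose axis enters $A$ and iterating, the expansion of $H$ along $\rho$ pushes the axes of $\phi^k(u)$ out along $\rho$, yielding $\phi^k(u)\to j(\rho)$ in $\partial F_n$. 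Hence $j(\rho)\in\mathcal{A}(\phi)$ with $\tau(j(\rho))=A$, proving (1); combining with $a(\phi)=\#\mathcal{A}(\phi)$ gives (2).

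For (3) and (4), the finite set $\pi_0(T_\Phi\setminus C_H)$ (bounded by $2n$ via Proposition \ref{2n-2 for endo}) is permuted by $H$, so some $H^k$ fixes every component. Since $\Phi^k$ is again iwip and outside $\out(F_n)$ (iwip is preserved by positive powers, and non-surjectivity propagates to iterates) with homothety $H^k$ and the same center $C_H$, applying (1) and (2) to $\phi^k$ yields $a(\phi^k)=\#\pi_0(T_\Phi\setminus C_H)$. Then $\rk\fix\phi^k=0=\rk\stab(C_H)$ give
\[
2\ind(\phi^k)=a(\phi^k)-2=\#\pi_0(T_\Phi\setminus C_H)-2=\ind_{\mathrm{geo}}(C_H).
\]
Any periodic point of $\partial\phi$ is a fixed point of some $\partial\phi^m$, corresponding via (1) applied to $\phi^m$ to an $H^m$-fixed component; enlarging $m$ so all components are fixed realizes the bijection, and Proposition \ref{2n-2 for endo} gives the upper bound $2n$.

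When $C_H$ does not exist, Lemma \ref{H free} gives $\fix\phi=\{1\}$ and Proposition \ref{3 prop} supplies a unique attracting fixed point, so $a(\phi)=1$. A fixed point of $H^k$ in $T_\Phi$ (any homothety with factor $\lambda_\Phi^k>1$ has at most one) would also be fixed by $H$, contradicting the nonexistence of $C_H$; hence $C_{H^k}$ never exists and $a(\phi^k)=1$ for every $k\geq1$, giving a unique periodic point in $\partial F_n$. The main obstacle is the surjectivity half of (1): promoting the algebraically fixed boundary point $j(\rho)$ attached to an $H$-fixed component to a dynamically attracting one, which requires coupling Lemma \ref{inj of j} with the expansion of $H$ along the eigenray $\rho$ to furnish the convergence $\phi^k(u)\to j(\rho)$.
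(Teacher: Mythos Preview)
Your argument is correct and follows the same route as the paper: define $\tau$ via the eigenray of Proposition~\ref{3 prop}, get injectivity from uniqueness of eigenrays in a component, deduce (2) from Proposition~\ref{1 cor}, and handle (3)--(4) by observing that $H$ permutes the finitely many components of $T_\Phi\setminus C_H$. The one place you go beyond the paper is the surjectivity half of (1): the paper simply asserts $\tau(\mathcal{A}(\phi))=\pi_0^H(T_\Phi\setminus C_H)$ ``from the definition of eigenrays,'' while you sketch the dynamical argument that $j(\rho)$ is actually attracting by pushing axes out along $\rho$ under iteration of $H$---this is the right idea (it is essentially how \cite[\S4]{GJLL} proceeds), and your identification of it as the main obstacle is accurate. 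For the centerless case, the paper appeals directly to \cite[Theorem~3.4]{ZZ2}, whereas you go through Proposition~\ref{3 prop} together with the clean observation that a fixed point of $H^k$ is forced to be fixed by $H$; both arguments are valid.
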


\begin{proof}

If $C_H$ doesn't exist, then every homothety $H^k (k\geq 1)$ doesn't have a center (because the expansion factor $>1$). Note that $\fix\phi^k$ is trivial from Lemma \ref{H free}, then by \cite[Theorem 3.4]{ZZ2}, we have $a(\phi^k)=1$ for all $k\geq 1$.

We assume that $C_H$ exists below.

(1) Let $X$ be an attracting fixed point. Then there exists an eigenray $\rho_X$ of $H$ such that $X=j(\rho_X)$ by Proposition \ref{3 prop}. Now we define the map $\tau$ as follows:
let $\tau$ map $X$ into the component $A_X$ of $T_{\Phi}\setminus C_H$, where $A_X$ contains the eigenray $\rho_X$ of $X$. It is clear that  $\tau$ is well-defined since the eigenray is unique.

Below we show that the map $\tau$ is injective. If $X\neq X'$ and $\tau(X)=\tau(X')=A_X$, namely, the overlap of eigenrays $\rho_X$ and $\rho_{X'}$ is a non-degenerate arc, then we get $\rho_X=\rho_{X'}$ since the eigenray is unique in a component fixed by $H$. From Proposition \ref{3 prop}, $$X=j(\rho_X)=j(\rho_{X'})=X',$$
so $\tau$ is injective. From the definition of eigenrays, we get $\tau(\mathcal{A}(\phi))=\pi_0^H(T_{\Phi}\setminus C_H)$.

(2) The assertion follows from the above assertion (1) and Proposition \ref{1 cor} directly.

(3) Note that the homothety $H$ acting on $\pi_0(T_\Phi\setminus C_H)$ is a permutation and $H^i$ associates to $\phi^i$ for any $i>0$. Moreover, by Proposition \ref{2n-2 for endo}, $\pi_0(T_\Phi\setminus C_H)$ is finite. So there is a positive power $H^k$ fixing every element of $\pi_0(T_{\Phi}\setminus C_H)$, and hence
$$a(\phi^k)=\#\pi_0(T_\Phi\setminus C_H)$$
follows from assertion (2). Recall that $\fix(\phi^k)=\stab(C_H)=\{1\}$, by comparing the definitions of $\ind(\phi)$ and $\ind_{\mathrm{geo}}(C_H)$ (Equations \ref{def ind for Out} and \ref{def of geo ind}), we have $2\ind(\phi^k)=\ind_{\mathrm{geo}}(C_H).$

(4) Since $\#\pi_0(T_\Phi\setminus C_{H})\leq 2n$ according to Proposition \ref{2n-2 for endo}, we can find an enough large power $H^k$ such that every component of $T_{\Phi}\setminus C_H$ is fixed by $H^k$. So every component corresponds to a unique eigenray that associates to an attracting fixed point of $\phi^k$, which is a periodic point of $\partial\phi$. Conversely, if $X$ is a periodic point of $\partial\phi$ with period $p$, then $X$ is an attracting fixed point of $\partial\phi^p$. So by assertion (1), there exists a unique component $A_X$ which is fixed by $H^p$.
\end{proof}

\begin{proof}[\textbf{Proof of Theorem \ref{main thm2}}]
Let $\Phi$ be an iwip outer endomorphism of $F_n$. Note that for all $m>0$, the stable tree $T_{\Phi^m}=T_\Phi$ and hence $\ind_{\mathrm{geo}}(T_{\Phi^m})=\ind_{\mathrm{geo}}(T_\Phi)$.

If $\Phi\in \out(F_n)$, the result is from \cite[Section 5A]{CH1} and \cite[Case 5.A]{GJLL}.

If $\Phi\notin \out(F_n)$, by Proposition \ref{2n-2 for endo}, we have
$$\ind_{\mathrm{geo}}(T_{\Phi^m})=\ind_{\mathrm{geo}}(T_{\Phi})=2n-2$$ for all $m>0$.

Now we assume that $\Phi\notin \out(F_n)$ and every branch point $x\in G_\Phi$ is $h$-periodic (i.e. $h^t(x)=x$ for some $t>0$ depending on $x$) in the following. Let $p: T_\Phi \rightarrow G_\Phi$ be the quotient map by the $F_n$-action. Since there are at most $2n -2$ orbits of branch points in $T_\Phi$, we obtain that $G_\Phi$ has at most $2n -2$ branch points. Therefore, there is a uniform power $h^\ell$ ($\ell>0$) fixing all the branch points in $G_\Phi$. Let $Q\in p^{-1}(x)$ be a branch point in $T_\Phi$. Then there exits $c\in F_n$ such that $c^{-1}H^\ell(Q)=Q$, that is, $Q$ is the center of $c^{-1}H^\ell$. Note that $c^{-1}H^\ell$ is associated to the iwip endomorphism $i_{c^{-1}}\comp \phi^\ell\in \Phi^\ell$, and hence, every branch point of $G_\Phi$ is $h$-periodic if and only if every branch point $Q\in T_{\Phi}$ can be a center of a homothety associated to some iwip endomorphism $\varphi\in \Phi^\ell$. So, following from the proof of Item (3) in Proposition \ref{key prop}, there exists a positive power $\varphi^s\in \Phi^k$ ($k=\ell s>0$ can be uniform and independent on $Q$ because $\#\pi_0(T_\Phi\setminus Q)$ is finite) such that
\begin{equation}\label{equa main}
2\ind(\varphi^{s})=\ind_{\mathrm{geo}}(Q)>0.
\end{equation}
Moreover, note that every endomorphism $\psi\in \Phi^{k}$ such that $\psi$ associated the homothety with branch point center has positive index. From Lemma \ref{homothety equa.}, we have one-to-one correspondences:
\begin{eqnarray}
&&\{\mathrm{isogrediency ~class ~in} ~\Phi^{k} ~\mathrm{with ~positive ~index}\}\nonumber\\
&\leftrightarrow& \{\mathrm{orbits ~of ~branch ~points ~in} ~T_{\Phi}\}\nonumber\\
&\leftrightarrow& \{\mathrm{orbits ~of ~branch~centers ~of ~homotheties ~associated ~to ~endomorphisms ~in} ~\Phi^{k}\}\nonumber.
\end{eqnarray}
Now combining the above correspondences, Equation \ref{equa main} and the definitions of $\ind(\Phi)$ and $\ind_{\mathrm{geo}}(T_\Phi)$ (Equations \ref{def ind for Out} and \ref{def of geo ind}), we have
 $$2\ind(\Phi^{k})=\ind_{\mathrm{geo}}(T_\Phi)=2n-2.$$

If $\Phi\notin \out(F_n)$ and some branch point $x\in G_\Phi$ is not $h$-periodic, then every branch point $Q\in p^{-1}(x)$ is not a center of any homothety induced by a positive power of $h$, and there is an iwip endomorphism $\psi\in \Phi^m$ associated to the homothety $H_\psi$ without center for all $m>0$ (see Example \ref{example referee}). Following from Lemma \ref{H free} and Proposition \ref{key prop}, we have
$\ind(\psi)=0+1/2-1<0$ and $\ind_{\mathrm{geo}}(Q)>0$.
Therefore, by the definition of geometric index and Proposition \ref{2n-2 for endo}, we have
$$ 2\ind(\Phi^m)<\ind_{\mathrm{geo}}(T_\Phi)=2n-2$$
for all $m>0.$ The proof is completed.
 \end{proof}

\begin{rem}
Note that it is possible for a center to be bivalent (i.e. not a branch point), but such a center corresponds to an isogrediency class with index $=0$. So, in the one-to-one correspondences in the above proof of Theorem \ref{main thm2}, the third part should be orbits of ``branch centers".
\end{rem}

\section{Proof of Theorem \ref{main thm3} and two examples}\label{sect last}

\begin{thmbis}{main thm3}
Suppose $X$ is a connected finite graph but not a tree, and $f: X\rightarrow X$ is a selfmap. Let $f_\pi$ induce an iwip outer endomorphism $\Phi$ of the fundamental group $\pi_1(X)=F_n$. Then there is an integer $k\geq 1$ such that
$$\sum_{\ind(f^k, \F)+\chr(f^k, \F)<0} \{\ind(f^k, \F)+\chr(f^k, \F)\}= 2\chi(X),$$
if and only if
\begin{enumerate}
	\item $\Phi\in \out(F_n)$ and the stable tree $T_\Phi$ is geometric; or
	\item $\Phi\notin \out(F_n)$ and every branch point in $G_\Phi$ is $h$-periodic.
\end{enumerate}
\end{thmbis}

\begin{proof}%[\textbf{Proof of Theorem \ref{main thm3}}]
Let $\pi_1(X)=F_n$, the free group of rank $n$. Then the Euler characteristic $\chi(X)=1-n\leq 0$. Recall that an iwip endomorphism is injective, then by Theorem \ref{2 thm}, to prove Theorem \ref{main thm3}, is equivalent to prove: there is a positive power $\Phi^k$ such that $$2\ind(\Phi^k)=2n-2,$$
if and only if (1) $\Phi$ is an iwip outer automorphism and the stable tree $T_\Phi$ is geometric, or (2) $\Phi\notin\out(F_n)$ is an iwip outer
endomorphism and every branch point in $G_\Phi$ is $h$-periodic,  which holds according to Theorem \ref{main thm2}. So the proof is completed.
\end{proof}

Finally, let us give two examples that do not satisfy Theorem \ref{main thm3}.

\begin{exam}[Jiang, \cite{J09}]
Let $f: (R_2, \ast) \to(R_2, \ast)$ be a $\pi_1$-injective selfmap of the graph $R_2$ with one vertex $\ast$ and two edges $a, b$, such that $f(a)=a$ and
$f(b)=\bar bab$.

\begin{center}
\setlength{\unitlength}{1.2mm}
\begin{picture}(42,25)(0,-10)
\qbezier(20,0)(20.,1.00504)(19.8,1.98998)
\qbezier(19.8,1.98998)(19.6,2.97492)(19.208,3.90037)
\qbezier(19.208,3.90037)(18.816,4.82581)(18.2477,5.65473)
\qbezier(18.2477,5.65473)(17.6793,6.48366)(16.9574,7.18291)
\qbezier(16.9574,7.18291)(16.2355,7.88216)(15.3889,8.42377)
\qbezier(15.3889,8.42377)(14.5423,8.96537)(13.6048,9.32767)
\qbezier(13.6048,9.32767)(12.6673,9.68997)(11.6765,9.85846)
\qbezier(11.6765,9.85846)(10.6857,10.027)(9.68116,9.99492)
\qbezier(9.68116,9.99492)(8.67663,9.96287)(7.69857,9.73157)
\qbezier(7.69857,9.73157)(6.7205,9.50026)(5.80803,9.07895)
\qbezier(5.80803,9.07895)(4.89556,8.65764)(4.08517,8.06318)
\qbezier(4.08517,8.06318)(3.27479,7.46871)(2.59891,6.72487)
\qbezier(2.59891,6.72487)(1.92303,5.98103)(1.4087,5.11757)
\qbezier(1.4087,5.11757)(0.89436,4.25411)(0.562137,3.30556)
\qbezier(0.562137,3.30556)(0.229914,2.35702)(0.0930946,1.36133)
\qbezier(0.0930946,1.36133)(-0.043725,0.365647)(0.0203316,-0.637352)
\qbezier(0.0203316,-0.637352)(0.0843881,-1.64035)(0.346759,-2.61054)
\qbezier(0.346759,-2.61054)(0.609129,-3.58073)(1.05932,-4.47931)
\qbezier(1.05932,-4.47931)(1.50951,-5.37789)(2.12951,-6.1689)
\qbezier(2.12951,-6.1689)(2.74951,-6.95992)(3.51452,-7.61174)
\qbezier(3.51452,-7.61174)(4.27953,-8.26356)(5.15896,-8.7501)
\qbezier(5.15896,-8.7501)(6.03838,-9.23665)(6.99703,-9.53846)
\qbezier(6.99703,-9.53846)(7.95569,-9.84027)(8.95523,-9.94527)
\qbezier(8.95523,-9.94527)(9.95477,-10.0503)(10.9552,-9.95427)
\qbezier(10.9552,-9.95427)(11.9557,-9.85827)(12.917,-9.5651)
\qbezier(12.917,-9.5651)(13.8783,-9.27193)(14.7621,-8.79332)
\qbezier(14.7621,-8.79332)(15.6459,-8.31471)(16.4167,-7.6698)
\qbezier(16.4167,-7.6698)(17.1876,-7.0249)(17.8147,-6.23949)
\qbezier(17.8147,-6.23949)(18.4417,-5.45408)(18.9,-4.5596)
\qbezier(18.9,-4.5596)(19.3583,-3.66511)(19.6294,-2.69732)
\qbezier(19.6294,-2.69732)(19.9004,-1.72953)(19.9735,-0.727146)
\qbezier(19.9735,-0.727146)(20.,-0.364055)(20,0)
\put(20,00){\line(2,-1){20}}
\put(20,00){\line(2, 1){20}}
\put(40,-10){\line(0, 1){20}}

\put(11,10){\vector(-1,0){2}}
\put(26,-3){\vector(2,-1){5}}
\put(40,-2){\vector(0, 1){5}}
\put(32, 6){\vector(-2,-1){5}}

\put(-2,0){\makebox(0,0)[rc]{$a$}}
\put(41,0){\makebox(0,0)[lc]{$b_2$}}
\put(30,9){\makebox(0,0)[lc]{$b_3$}}
\put(30,-4){\makebox(0,0)[cb]{$b_1$}}

\put(40,-10){\makebox(0,0)[cc]{$\bullet$}}
\put(42,-10){\makebox(0,0)[lc]{$x$}}
\put(40,10){\makebox(0,0)[cc]{$\bullet$}}
\put(42,10){\makebox(0,0)[lc]{$y$}}

\end{picture}
\end{center}

Consider that $a$ gets initially expanded along itself on one tip and shrinks on the other.
By inserting two new vertices $x, y$ in the interior of $b$, we break $b$ into three edges $b=b_1b_2b_3$ such that
$$f(a)=a, ~~f(b_1)=\bar b, ~~f(b_2)=a, ~~f(b_3)=b.$$
If we write the path classes of the edges $a$ and  $b$ still by $a$ and $b$ respectively, then the fundamental group $\pi_1(R_2, \ast)=\langle a, b\rangle\cong F_2$, with $f_{\ast}: \pi_1(R_2, \ast)\to \pi_1(R_2, \ast)$ defined by $f_{\ast}(a)=a$ and
$f_{\ast}(b)=b^{-1}ab$.

Note that $f^k_\ast$ fixes the free factor $\langle a \rangle$ of $\pi_1(R_2, \ast)=\langle a, b\rangle$ for every $k\geq 1$. It implies that the injective endomorphism $f_\ast$ is not iwip with $\fix(f^k_{\ast})=\langle a\rangle\cong \Z$.
Moreover, the fixed point class $\F=\{\ast\}$ with
$$\ind(f^k, \F)=-1,  \quad \chr(f^k, \F)=1-\rk\fix(f^k_{\ast})=0$$
is the unique fixed point class contributes to  the left side of inequality (B) in Theorem \ref{JWZ main theorem}.
Therefore
$$\ind(f^k, \F)+\chr(f^k, \F)=-1+0> -2=2\chi(R_2)$$
for every $k\geq 1$.
\end{exam}

\begin{exam}\label{example referee}
	Let $F_2=\langle a,b \rangle$ and $\phi:F_2\rightarrow F_2$ be an endomorphism given by $\phi(a)=ab$ and $\phi(b)=ab^{-1}a.$ The outer endomorphism $\Phi$ represented by $\phi$ should be an iwip. Let the graph $G_\Phi$ be the quotient of the stable tree $T_\Phi$ by the $F_n$-action. Denote by $h:G_\Phi\rightarrow G_\Phi$ the map induced by the homothety $H$. Then we have $h:G_\Phi\rightarrow G_\Phi$ a map on the theta graph defined as follows:
	\begin{enumerate}
		\item Let $V(G_\Phi)=\{v_1,v_2\}$ and $E(G_\Phi)=\{e_1,e_2,e_3\}.$ Orient each edge to start at $v_1$ and end at $v_2$. Identify $a\in F_2$ with the loop $e_2\overline{e}_1$ and $b\in F_2$ with the loop $e_2\overline{e}_3;$
		\item The map $h:G_\Phi\rightarrow G_\Phi$ induced by $H:T_\Phi\rightarrow T_\Phi$ will map the edges $e_1,e_2,e_3$ to the loops $e_3\overline{e}_2$,  $e_2\overline{e}_1$, $e_1\overline{e}_3$ respectively.
	\end{enumerate}

\begin{center}
	\setlength{\unitlength}{0.9mm}
	\begin{picture}(70,25)(-35,-12.5)
		\put(-14,0){\line(1,0){30}}
		
		\qbezier(16,0)(16,10)(1,10)
		\qbezier(-14,0)(-14,10)(1,10)
		\qbezier(16,0)(16,-10)(1,-10)
		\qbezier(-14,0)(-14,-10)(1,-10)

		\put(0,0){\vector(1,0){2}}
		\put(0,10){\vector(1,0){2}}
		\put(0,-10){\vector(1,0){2}}
		
		\put(0,6){\makebox(0,0)[cb]{$e_1$}}
		\put(0,-4){\makebox(0,0)[cb]{$e_2$}}
		\put(0,-14){\makebox(0,0)[cb]{$e_3$}}
		
		\put(-14,0){\makebox(0,0)[cc]{$\bullet$}}
		\put(-16,0){\makebox(0,0)[rc]{$v_1$}}
		\put(16,0){\makebox(0,0)[cc]{$\bullet$}}
		\put(18,0){\makebox(0,0)[lc]{$v_2$}}
		
	\end{picture}
\end{center}

Note that the vertex $v_2$ is not $h$-periodic, and $h$ has two fixed points $v_1$ and $v'$ (where $v'$ is the fixed point in the interior of the edge $e_3$ since $e_3$ maps over itself with reverse orientation). So $\phi$ has branch center $v_1$ and exactly one other isogrediency class $\phi'\in\Phi$ has center $v'$. Since $v'$ is bivalent, the isogrediency class $\phi'$ doesn't contribute to the index of $\Phi$. In general, more isogrediency classes in $\Phi^k$ can have centers (because $h^k$ can have more fixed points). But all these new centers are bivalent, and the corresponding isogrediency classes in $\Phi^k$ have index $0$. So they also don't contribute to the index of $\Phi^k$, and hence
$$2\ind(\Phi^k)=2\ind(\phi^k)\leq 1$$
for all $k\geq1$. In conclusion, if we remove the condition ``\emph{every branch point in $G_\Phi$ is $h$-periodic}" from Theorem \ref{main thm2}, then the equality $2\ind(\Phi^k)=2n-2$ will not hold.
\end{exam}

\noindent\textbf{Acknowledgements.}  The authors would like to thank the anonymous referee very much for his/her valuable
and detailed comments helped to greatly improve our earlier draft, especially for pointing out an error in the proof of the main theorems, and providing detailed corrections and Example \ref{example referee}. %The authors are partially supported by NSFC (No. 12271385 and 12471066), the Shaanxi Fundamental Science Research Project for Mathematics and Physics (No. 23JSY027), and the Fundamental Research Funds for the Central Universities.
%%%==================================================================================================================================================================================

\end{document}